\documentclass[11pt]{amsart}

\topmargin 0mm \evensidemargin 15mm \oddsidemargin 15mm \textwidth
140mm \textheight 230mm

\theoremstyle{plain}
\newtheorem{theorem}                {Theorem}      [section]
\newtheorem*{theorem*}                {Theorem \ref{thm:appl}}
\newtheorem{proposition}  [theorem]  {Proposition}
\newtheorem{corollary}    [theorem]  {Corollary}
\newtheorem{lemma}        [theorem]  {Lemma}

\theoremstyle{definition}

\newtheorem{remark}       [theorem]  {Remark}
\newtheorem{definition}   [theorem]  {Definition}

\DeclareMathOperator{\trace}{trace} 
\DeclareMathOperator{\grad}{grad}

\DeclareMathOperator{\Div}{div} \DeclareMathOperator{\id}{I}

\DeclareMathOperator{\vol}{Vol}

\DeclareMathOperator{\Span}{span}
 
\DeclareMathOperator{\cst}{constant}

\numberwithin{equation}{section}

\begin{document}

\title[CMC biconservative surfaces]
{CMC biconservative surfaces in $\mathbb{S}^n\times\mathbb{R}$ and $\mathbb{H}^n\times\mathbb{R}$}

\author{Dorel~Fetcu}
\author{Cezar~Oniciuc}
\author{Ana~Lucia~Pinheiro}

\address{Department of Mathematics and Informatics\\
Gh. Asachi Technical University of Iasi\\
Bd. Carol I no. 11 \\
700506 Iasi, Romania} \curraddr{Department of Mathematics, Federal University of Bahia, Av.
Adhemar de Barros s/n, 40170--110 Salvador, BA, Brazil} \email{dfetcu@math.tuiasi.ro}

\address{Faculty of Mathematics\\ Al. I. Cuza University of Iasi\\
Bd. Carol I, 11 \\ 700506 Iasi, Romania} \email{oniciucc@uaic.ro}

\address{Department of Mathematics, Federal University of Bahia, Av.
Adhemar de Barros s/n, 40170-110 Salvador, BA, Brazil} \email{anapinhe@ufba.br}

\thanks{The first author was supported by a grant of CNPq, Brazil, BJT 373672/2013--6. The second
author was supported by a grant of the Romanian National Authority
for Scientific Research, CNCS -- UEFISCDI, project number
PN-II-ID-PCE-2012-4-0640. The third author was supported by a grant of CNPq, Brazil.}

\subjclass[2010]{53A10, 53C42}

\keywords{biconservative surfaces, surfaces with parallel mean curvature vector}

\begin{abstract} We classify non-minimal biconservative surfaces with parallel mean curvature vector field in $\mathbb{S}^n\times\mathbb{R}$ and $\mathbb{H}^n\times\mathbb{R}$. When these surfaces do not lie in $\mathbb{S}^n$ or $\mathbb{H}^n$ and they are not vertical cylinders, we find their explicit (local) equation. We also prove a result on the compactness of biconservative surfaces with constant mean curvature in Hadamard manifolds. 
\end{abstract}

\maketitle

\section{Introduction}

Closely related to the theory of {\it biharmonic submanifolds}, the study of {\it biconservative submanifolds} is a very recent and interesting topic in the field of Differential Geometry. In general, a biharmonic map $\psi:(M,g)\rightarrow(\bar M,h)$ between two Riemannian manifolds is a critical point
of the \textit{bienergy functional}
$$
E_2:C^{\infty}(M,\bar M)\rightarrow\mathbb{R},\quad E_{2}(\psi)=\frac{1}{2}\int_{M}|\tau(\psi)|^{2}\ dv,
$$
where $\tau(\psi)$ is the tension field of $\psi$. These critical points are given by the vanishing of the {\it bitension field} $\tau_2(\psi)$ of $\psi$. If $\psi:(M,g)\rightarrow(\bar M,h)$ is a biharmonic Riemannian immersion, then $M$ is called a biharmonic submanifold of $\bar M$.

Now, consider a fixed map $\psi:M\rightarrow(\bar M,h)$ and look at $E_2$ as being defined on the set of all Riemannian metrics on $M$. What we get is a new functional whose critical points, that this time are Riemannian metrics, are given by the vanishing of the {\it stress-energy tensor} $S_2$, that satisfies
$$
\Div S_2=-\langle\tau_2(\psi),d\psi\rangle.
$$ 
A submanifolds that satisfies $\Div S_2=0$ is called a {\it biconservative submanifold} and it is easy to see that a submanifold is biconservative if and only if the tangent part of its bitension field vanishes.

Until now a special attention was paid to biconservative surfaces in space forms. Thus, when the ambient space form is $3$-dimensional, such surfaces were completely classified in \cite{CMOP} and \cite{Fu} and then biconservative surfaces with constant mean curvature in $4$-dimensional space forms were described in \cite{MOR2}. 

We will extend this study to surfaces with parallel mean curvature vector field ({\it PMC surfaces}) and, more generaly, to those having constant mean curvature ({\it CMC surfaces}) in product spaces of type $M^n(c)\times\mathbb{R}$, where $M^n(c)$ is a space form. While all PMC surfaces in space forms are biconservative, we will see that in this new setting the situation is quite different.

Another notion that we deal with in our paper is that of submanifolds with {\it finite total curvature}, i.e., those submanifolds $\Sigma^m$ in a Riemannian manifold $\bar M$ that satisfy 
$$
\int_{\Sigma^m}|\phi|^m\ dv<+\infty,
$$
where $\phi$ is the traceless part of the second fundamental form of $\Sigma^m$. One of the most interesting results concerning these submanifolds is that obtained by P.~B\'erard, M.~do Carmo, and W.~Santos in the very first paper to treat this subject \cite{bds}, where they proved that a CMC surface with $|H|>\sqrt{-c}$ and finite total curvature in a space form $M^3(c)$, $c\leq 0$, must be compact. This theorem was then extended to PMC submanifolds in a space form $M^n(c)$, $c\leq 0$, in \cite{ccs}. 

We will show how, in the case of CMC biconservative surfaces, these results hold in any ambient space whose sectional curvature is negative and bounded. 

In our paper, we prove a classification result for non-minimal PMC biconservative surfaces in $\mathbb{S}^n\times\mathbb{R}$ and $\mathbb{H}^n\times\mathbb{R}$ and, moreover, when these surfaces are not vertical cylinders nor they lie entirely in $\mathbb{S}^n$ or $\mathbb{H}^n$, we also find their explicit (local) equation (Theorem \ref{thm:main1}). While, as we will see from this theorem, such surfaces do not exist when $n=3$, we find examples of CMC biconservative (but not PMC) surfaces in $M^3(c)\times\mathbb{R}$ that do not lie in $M^3(c)$ and are not vertical cylinders (Theorem \ref{thm:biCMC}). Moreover, we study the biharmonicity of these examples (Theorem \ref{thm:CMCbiharmonic}).  

Next, we study CMC biconservative surfaces in Hadamard manifolds as a generalization of the study of CMC biconservative
surfaces in $M^n(c)\times\mathbb{R}$ with $c<0$. First, we show that CMC biconservative surfaces in a Riemannian manifold satisfy a Simons type inequality and then, as CMC surfaces in a Hadamard manifold also satisfy a Sobolev type inequality, we use these two results to prove that if a non-minimal CMC biconservative surface in a Hadamard manifold with bounded sectional curvature has finite total curvature $\int_{\Sigma^2}|\phi_H|^2\ dv<+\infty$ and the norm of its second fundamental form is bounded, then the function $|\phi_H|$ goes to $0$ uniformly at infinity, where $\phi_H$ is the traceless part of the shape operator  of the surface in the direction of its mean curvature vector field $H$ (Theorem \ref{zero:PMC}). This theorem allows us to prove a result on the compactness of some of these surfaces (Theorem \ref{theo-compact0}). We note that we use a more general notion of finite total curvature than the original one in \cite{bds}. 

{\bf Acknowledgments.} The first author would like to thank the Department of Mathematics of the Federal University of Bahia in Salvador for providing a very stimulative work environment during the preparation of this paper.

\section{Preliminaries}

A natural generalization of \textit{harmonic maps}, i.e., those maps $\psi:(M,g)\rightarrow(\bar M,h)$ between two Riemannian manifolds that are
critical points of the \textit{energy functional}
$$
E(\psi)=\frac{1}{2}\int_{M}|d\psi|^{2}\ dv,
$$
are the {\it biharmonic maps}, i.e., the critical points of the
\textit{bienergy functional}
$$
E_{2}(\psi)=\frac{1}{2}\int_{M}|\tau(\psi)|^{2}\ dv,
$$
where $\tau(\psi)=\trace\nabla d\psi$ is the tension field that
vanishes for harmonic maps. This generalization was first
suggested by J.~Eells and J.~H.~Sampson in \cite{JEJS}.

The Euler-Lagrange equation for the
bienergy functional, derived in \cite{GYJ}, is
$$
\tau_{2}(\psi)=\Delta\tau(\psi)-\trace\bar R(d\psi,\tau(\psi))d\psi=0,
$$
where $\tau_{2}(\psi)$ is the \textit{bitension field} of $\psi$, $\Delta=\trace(\nabla^{\psi})^2 =\trace(\nabla^{\psi}\nabla^{\psi}-\nabla^{\psi}_{\nabla})$ is the rough Laplacian defined on
sections of $\psi^{-1}(T\bar M)$ and
$\bar R$ is the curvature tensor of $\bar M$, given by $\bar R(X,Y)Z=[\bar\nabla_X,\bar\nabla_Y]Z-\bar\nabla_{[X,Y]}Z$. 

The {\it stress-energy tensor} associated to a variational problem, described in \cite{H} by D.~Hilbert, is a symmetric $2$-covariant tensor $S$ conservative at critical points, i.e., satisfying $\Div S=0$. 

Such a tensor $S$, given by $S=(1/2)|d\psi|^2g-\psi^{\ast}h$, was employed in the study of harmonic maps by P.~Baird and J.~Eells in \cite{BE} and A.~Sanini in \cite{S}. It satisfies $\Div S=-\langle\tau(\psi),d\psi\rangle$ and, therefore, $\Div S$ vanishes when $\psi$ is harmonic. Since for isometric immersions $\tau(\psi)$ is normal, it follows that $\Div S=0$ is always satisfied in this case.

The stress-energy tensor $S_2$ of the bienergy, first studied in \cite{J} and then in \cite{CMOP,Fu,LMO,MOR,MOR2}, is given by
\begin{align*}
S_2(X,Y)=&\frac{1}{2}|\tau(\psi)|^2\langle X,Y\rangle+\langle d\psi,\nabla\tau(\psi)\rangle\langle X,Y\rangle\\&-\langle d\psi(X),\nabla_Y\tau(\psi)\rangle-\langle d\psi(Y),\nabla_X\tau(\psi)\rangle
\end{align*}
and it satisfies
$$
\Div S_2=-\langle\tau_2(\psi),d\psi\rangle.
$$
If $\psi:(\Sigma^m,g)\rightarrow(\bar M,h)$ is an isometric immersion, then we have $\Div S_2=-\tau_2(\psi)^{\top}$ and thus $\Div S_2$ does not always vanish.

\begin{definition} A submanifold $\psi:\Sigma^m\rightarrow\bar M$ of a Riemannian manifold $\bar M$ is called a {\it biconservative submanifold} if $\Div S_2=0$, i.e., $\tau_2(\psi)^{\top}=0$.
\end{definition}

The following decomposition of the bitension field was obtained in \cite{BMO} (see also \cite{O}).

\begin{theorem}[\cite{BMO}]\label{thm:split} A submanifold $\psi:\Sigma^m\rightarrow\bar M$ in a Riemannian manifold $\bar M$, with second fundamental form $\sigma$,
mean curvature vector field $H$, and shape operator $A$, is biharmonic if and only if the normal and the tangent components of $\tau_2(\psi)$ vanish, i.e., respectively
$$
-\Delta^{\perp}H+\trace\sigma(\cdot,A_H\cdot)+\trace(\bar R(\cdot,H)\cdot)^{\perp}=0
$$
and
$$
\frac{m}{2}\grad|H|^2+2\trace A_{\nabla^{\perp}_{\cdot}H}(\cdot)+2\trace(\bar R(\cdot,H)\cdot)^{\top}=0,
$$
where $\Delta^{\perp}$ is the Laplacian in the normal bundle and $\bar R$ is the curvature tensor of $\bar M$.
\end{theorem}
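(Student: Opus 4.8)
The plan is to start from the formula for the bitension field recalled above and to use that, for an isometric immersion, $\tau(\psi)=mH$, so that the curvature term becomes $\trace\bar R(d\psi,\tau(\psi))d\psi=m\sum_i\bar R(e_i,H)e_i$ for a local orthonormal frame $\{e_i\}_{i=1}^m$ on $\Sigma^m$. The whole argument then reduces to computing $\Delta(mH)$ and splitting both $\Delta(mH)$ and the curvature term into their parts tangent and normal to $\Sigma^m$. To this end I would fix a point $p$ and choose $\{e_i\}$ geodesic at $p$, so that there $\Delta(mH)=m\sum_i\bar\nabla_{e_i}\bar\nabla_{e_i}H$.

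Next I would compute $\bar\nabla_{e_i}\bar\nabla_{e_i}H$ by applying the Gauss formula $\bar\nabla_X Y=\nabla_X Y+\sigma(X,Y)$ and the Weingarten formula $\bar\nabla_X\eta=-A_\eta X+\nabla^\perp_X\eta$ twice. Starting from $\bar\nabla_{e_i}H=-A_H e_i+\nabla^\perp_{e_i}H$, the tangent term $A_H e_i$ is differentiated via Gauss and the normal term $\nabla^\perp_{e_i}H$ via Weingarten, so that summing over $i$ at $p$ gives
\begin{align*}
\Delta H=&-\trace\nabla(A_H\,\cdot)-\trace A_{\nabla^\perp_\cdot H}(\cdot)\\
&-\trace\sigma(\cdot,A_H\cdot)+\Delta^\perp H,
\end{align*}
where $\trace\nabla(A_H\,\cdot)=\sum_i\nabla_{e_i}(A_H e_i)$ and $\Delta^\perp H=\sum_i\nabla^\perp_{e_i}\nabla^\perp_{e_i}H$, the first two terms being tangent and the last two normal. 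Combined with the decomposition of $\sum_i\bar R(e_i,H)e_i$, the normal component of $\tau_2(\psi)$ is immediately $m(\Delta^\perp H-\trace\sigma(\cdot,A_H\cdot)-\trace(\bar R(\cdot,H)\cdot)^\perp)$, which, up to the overall factor $-m$, is the first displayed equation.

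The tangent component still carries the term $\trace\nabla(A_H\,\cdot)$, and rewriting it is the technical heart of the proof. For a fixed tangent vector $X$ at $p$ I would use $\langle A_H e_i,X\rangle=\langle\sigma(e_i,X),H\rangle$ and differentiate, splitting $\sum_i\langle\nabla_{e_i}(A_H e_i),X\rangle$ into a piece carrying $\nabla^\perp_{e_i}H$, which reassembles as $\trace A_{\nabla^\perp_\cdot H}(\cdot)$, and a piece carrying $(\nabla^\perp_{e_i}\sigma)(e_i,X)$. The Codazzi equation then replaces $(\nabla^\perp_{e_i}\sigma)(e_i,X)$ by $(\nabla^\perp_X\sigma)(e_i,e_i)$ plus a curvature correction; summing over $i$ yields $\frac{m}{2}\langle\grad|H|^2,X\rangle$ from $\nabla^\perp_X(mH)$ and, after the pair symmetry $\langle\bar R(e_i,X)e_i,H\rangle=\langle\bar R(e_i,H)e_i,X\rangle$, the term $\trace(\bar R(\cdot,H)\cdot)^\top$. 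This produces the identity
\begin{equation*}
\trace\nabla(A_H\,\cdot)=\frac{m}{2}\grad|H|^2+\trace A_{\nabla^\perp_\cdot H}(\cdot)+\trace(\bar R(\cdot,H)\cdot)^\top.
\end{equation*}
Substituting it into the tangent part of $\tau_2(\psi)$ and collecting terms gives, again up to the factor $-m$, the second displayed equation; since $\psi$ is biharmonic exactly when both components of $\tau_2(\psi)$ vanish, the two equations together are equivalent to biharmonicity. I expect the only delicate points to be the bookkeeping of signs, governed by the convention $\Delta=\trace(\nabla^\psi)^2$, and using the correct form of the Codazzi equation and the curvature symmetries in the last step.
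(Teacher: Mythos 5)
Your proposal is correct and is essentially the canonical argument: the paper does not prove this theorem itself (it quotes it from \cite{BMO}; see also \cite{O}), and the proof in those references is exactly your computation --- apply the Gauss and Weingarten formulas twice to $\Delta(mH)$, split into tangent and normal parts, and rewrite $\trace\nabla(A_H\,\cdot)$ via the Codazzi equation and the pair symmetry of $\bar R$, which is precisely what produces the factor $2$ on $\trace A_{\nabla^{\perp}_{\cdot}H}(\cdot)$ and $\trace(\bar R(\cdot,H)\cdot)^{\top}$ in the tangent equation. The only point worth flagging is a sign convention: your formulas reproduce the displayed equations verbatim provided $\Delta^{\perp}$ is read with the same sign convention as the paper's rough Laplacian $\Delta=\trace(\nabla^{\psi})^2$, which is how the statement must be understood.
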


\begin{corollary}\label{bicons} A submanifold $\Sigma^m$ in a Riemannian manifold $\bar M$ is biconservative if and only if
$$
\frac{m}{2}\grad|H|^2+2\trace A_{\nabla^{\perp}_{\cdot}H}(\cdot)+2\trace(\bar R(\cdot,H)\cdot)^{\top}=0.
$$
\end{corollary}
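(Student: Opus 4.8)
The plan is to read the result straight off Theorem \ref{thm:split} together with the definition of biconservativity. By definition, a submanifold $\psi:\Sigma^m\rightarrow\bar M$ is biconservative precisely when $\tau_2(\psi)^{\top}=0$, that is, when the tangent component of its bitension field vanishes. Theorem \ref{thm:split} has already performed the orthogonal decomposition of the biharmonicity equation $\tau_2(\psi)=0$ into its normal and tangent parts, and it identifies the tangent part $\tau_2(\psi)^{\top}$ with exactly the expression
$$
\frac{m}{2}\grad|H|^2+2\trace A_{\nabla^{\perp}_{\cdot}H}(\cdot)+2\trace(\bar R(\cdot,H)\cdot)^{\top}.
$$

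Accordingly, the first step is to observe that the second displayed equation in Theorem \ref{thm:split} is nothing other than the statement $\tau_2(\psi)^{\top}=0$. The second and final step is to combine this identification with the definition: since being biconservative means $\tau_2(\psi)^{\top}=0$, and since this tangent part equals the displayed expression, a submanifold is biconservative if and only if that expression vanishes. No further computation is needed.

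There is no genuine obstacle here, as all the mathematical content—the splitting of $\tau_2(\psi)$ into normal and tangent parts and the explicit evaluation of each component—is already contained in Theorem \ref{thm:split}. The corollary simply repackages the tangent half of that decomposition under the name \emph{biconservative}. The only point deserving attention is to transcribe the tangent component verbatim, in particular the curvature term $2\trace(\bar R(\cdot,H)\cdot)^{\top}$, which encodes the ambient geometry and which is precisely the term responsible for the phenomena that distinguish the product spaces $\mathbb{S}^n\times\mathbb{R}$ and $\mathbb{H}^n\times\mathbb{R}$ from the space-form case in the later sections.
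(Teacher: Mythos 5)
Your proposal is correct and follows exactly the route the paper intends: the paper states Corollary \ref{bicons} with no written proof, treating it as an immediate consequence of Theorem \ref{thm:split} together with the definition of biconservativity ($\tau_2(\psi)^{\top}=0$), which is precisely your two-step argument.
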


We also recall the following theorem that will be used later on.

\begin{theorem}[\cite{LO}]\label{thm:Q} Let $\Sigma^2$ be a biconservative oriented surface in a Riemannian manifold $\bar M$. Then the $(2,0)$-part of the Hopf quadratic form $Q$, defined on $\Sigma^2$ by
$$
Q(X,Y)=\langle\sigma(X,Y),H\rangle,
$$
is holomorphic if and only if the mean curvature $|H|$ of the surface is constant.
\end{theorem}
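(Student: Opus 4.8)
The plan is to work in a local conformal (isothermal) complex coordinate $z=x+\mathrm{i}y$ on $\Sigma^2$, in which the induced metric reads $g=\lambda^2\,|dz|^2$, and to reduce the whole statement to a single scalar identity. Writing $\partial_z=\frac12(\partial_x-\mathrm{i}\partial_y)$, the $(2,0)$-part of $Q$ is $Q^{(2,0)}=\phi\,dz^2$ with $\phi=\langle\sigma(\partial_z,\partial_z),H\rangle$, and by definition this differential is holomorphic if and only if $\partial_{\bar z}\phi=0$. So everything reduces to computing $\partial_{\bar z}\phi$ and recognizing it as a \emph{nonzero} multiple of $\partial_z|H|^2$: since $|H|^2$ is real, $\partial_z|H|^2$ vanishes precisely when $|H|$ is locally constant. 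This is the product-space analogue of the classical Hopf-differential computation for CMC surfaces.

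First I would record the elementary facts about this coordinate: the only nonzero Christoffel symbols are $\Gamma^z_{zz}$ and $\Gamma^{\bar z}_{\bar z\bar z}$, so that $\nabla_{\partial_{\bar z}}\partial_z=0$ and $\nabla_{\partial_z}\partial_z=(\partial_z\log\lambda^2)\partial_z$; and, since $H$ is the normalized mean curvature, $\sigma(\partial_z,\partial_{\bar z})=\tfrac{\lambda^2}{2}H$. Differentiating $\phi$ and using $\nabla_{\partial_{\bar z}}\partial_z=0$ gives $\partial_{\bar z}\phi=\langle(\nabla^\perp_{\partial_{\bar z}}\sigma)(\partial_z,\partial_z),H\rangle+\langle\sigma(\partial_z,\partial_z),\nabla^\perp_{\partial_{\bar z}}H\rangle$. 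Applying the Codazzi equation to the first term to swap $\partial_{\bar z}$ with one $\partial_z$, and then using $\sigma(\partial_z,\partial_{\bar z})=\tfrac{\lambda^2}{2}H$ together with the metric compatibility of $\nabla^\perp$, the derivative bookkeeping collapses to
$$
\partial_{\bar z}\phi=\frac{\lambda^2}{4}\,\partial_z|H|^2+\langle\bar R(\partial_{\bar z},\partial_z)\partial_z,H\rangle+\langle A_{\nabla^\perp_{\partial_{\bar z}}H}\partial_z,\partial_z\rangle.
$$

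The heart of the argument is to identify the last two terms with the biconservativity condition. I would rewrite the tangent-vector equation of Corollary \ref{bicons} (with $m=2$) in the frame $\{\partial_z,\partial_{\bar z}\}$, using $\grad f=\tfrac{2}{\lambda^2}[(\partial_{\bar z}f)\partial_z+(\partial_z f)\partial_{\bar z}]$ and the analogous complex trace expressions, and then pair the resulting identity with $\partial_z$. The Riemann symmetries do the matching: antisymmetry in the last two slots kills the $\langle\bar R(\cdot,H)\partial_z,\partial_z\rangle$-type contributions, while the pair symmetry $\langle\bar R(X,Y)Z,W\rangle=\langle\bar R(Z,W)X,Y\rangle$ turns the surviving one into exactly $\langle\bar R(\partial_{\bar z},\partial_z)\partial_z,H\rangle$. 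The outcome is
$$
\lambda^2\,\partial_z|H|^2+2\langle A_{\nabla^\perp_{\partial_{\bar z}}H}\partial_z,\partial_z\rangle+2\langle\bar R(\partial_{\bar z},\partial_z)\partial_z,H\rangle=0,
$$
and substituting this into the previous display yields $\partial_{\bar z}\phi=-\tfrac{\lambda^2}{4}\,\partial_z|H|^2$, which proves the equivalence in both directions.

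The main obstacle I anticipate is purely bookkeeping: matching, term by term, the curvature and shape-operator contributions produced by differentiating $\phi$ with those produced by projecting the biconservativity vector field onto $\partial_z$. This demands careful, consistent use of the Codazzi equation and of the curvature symmetries, and close attention to the (convention-dependent) normalization of $H$ and of the complex trace, since any slip there corrupts the numerical coefficients. The structural point, however, is robust to conventions: the obstruction to holomorphicity of $Q^{(2,0)}$ is precisely the tangent part of the bitension field, so biconservativity is exactly what forces $\partial_{\bar z}\phi$ to be a nonzero multiple of $\partial_z|H|^2$, and the theorem follows at once.
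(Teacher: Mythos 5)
The paper itself gives no proof of Theorem \ref{thm:Q}: it is imported from \cite{LO}, so there is no internal argument to compare against, and your reconstruction is essentially the standard proof from that source. Your computation is correct with the paper's conventions ($H=\frac{1}{2}\trace\sigma$, hence $\sigma(\partial_z,\partial_{\bar z})=\frac{\lambda^2}{2}H$, and the Codazzi equation \eqref{eq:Codazzi}): both displayed identities check out, they combine to $\partial_{\bar z}\phi=-\frac{\lambda^2}{4}\,\partial_z|H|^2$, and since $|H|^2$ is real this vanishes identically if and only if $|H|$ is (locally, hence on the connected surface) constant.
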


Now, let us consider $\Sigma^2$ an isometrically immersed surface in a Riemannian manifold $\bar M$. The second fundamental form $\sigma$ of $\Sigma^2$ is defined by the equation of Gauss
$$
\bar\nabla_XY=\nabla_XY+\sigma(X,Y),
$$
for any tangent vector fields $X$ and $Y$, where $\bar\nabla$ and $\nabla$ are the Levi-Civita connections on $\bar M$ and $\Sigma^2$, respectively, and we locally identified $d\psi(\nabla_XY)$ with $\nabla_XY$. Then the mean curvature vector field $H$ of $\Sigma^2$ is given by $H=(1/2)\trace\sigma$. The shape operator $A$ and the normal connection $\nabla^{\perp}$ are defined by the equation of Weingarten
$$
\bar\nabla_XV=-A_VX+\nabla^{\perp}_XV,
$$
for any tangent vector field $X$ and any normal vector field $V$. 

\begin{definition} If the mean curvature vector field $H$ of a surface $\Sigma^2$ is
parallel in the normal bundle, i.e.,
$\nabla^{\perp} H=0$, then $\Sigma^2$ is called a \textit{PMC surface}. 
\end{definition}
In space of constant curvature, a PMC submanifold trivially is biconservative. It would be then interesting to study PMC biconservative submanifolds in spaces whose sectional curvature is not constant.

Next, let $M^n(c)$ be a space form, i.e., a simply-connected $n$-dimensional manifold with
constant sectional curvature $c$, and consider the product manifold $\bar M=M^n(c)\times\mathbb{R}$. Then, the curvature tensor $\bar R$ of $\bar M$ is given by
\begin{align}\label{eq:barR}
\bar R(X,Y)Z=&c\{\langle Y, Z\rangle X-\langle X, Z\rangle Y-\langle Y,\xi\rangle\langle Z,\xi\rangle X+\langle X,\xi\rangle\langle Z,\xi\rangle Y\\\nonumber &+\langle X,Z\rangle\langle Y,\xi\rangle\xi-\langle Y,Z\rangle\langle X,\xi\rangle\xi\},
\end{align}
where $\xi$ is the unit vector field tangent to $\mathbb{R}$.

\begin{definition} A surface $\Sigma^2$ in $M^n(c)\times\mathbb{R}$ is called a \textit{vertical cylinder} over $\gamma$ if $\Sigma^2=\pi^{-1}(\gamma)$, where $\pi:M^n(c)\times\mathbb{R}\rightarrow M^n(c)$ is the projection map and $\gamma:I\subset\mathbb{R}\rightarrow M^n(c)$ is a curve in $M^n(c)$.
\end{definition}

It is easy to see that vertical cylinders $\Sigma^2=\pi^{-1}(\gamma)$ are characterized by the fact that $\xi$ is tangent to $\Sigma^2$.

We end this section recalling the following definition of Frenet curves that we will use later. 

\begin{definition} Let $\gamma:I\subset\mathbb{R}\rightarrow \bar M^{n+1}$ be a curve parametrized by
arc-length. Then $\gamma$ is called a {\it Frenet curve of
osculating order} $r$, $1\leq r\leq n+1$, if there exist $r$
orthonormal vector fields $\{X_1=\gamma',\ldots,X_r\}$ along
$\gamma$ such that
$$
\bar\nabla_{X_{1}}X_{1}=\kappa_{1}X_{2},\quad
\bar\nabla_{X_{1}}X_{i}=-\kappa_{i-1}X_{i-1} + \kappa_{i}X_{i+1},\quad\ldots\quad,
\bar\nabla_{X_{1}}X_{r}=-\kappa_{r-1}X_{r-1},
$$
for all $i\in\{2,\ldots,r-1\}$, where $\{\kappa_{1},\kappa_{2},\ldots,\kappa_{r-1}\}$ are positive
functions on $I$ called the {\it curvatures} of $\gamma$. A Frenet curve of osculating order $r$ is called a {\it helix of
order $r$} if $\kappa_i=\cst>0$ for $1\leq i\leq r-1$. A helix
of order $2$ is called a {\it circle}, and a helix of order $3$ is
simply called {\it helix}.
\end{definition}

\section{PMC biconservative surfaces in $M^n(c)\times\mathbb{R}$}

Let $\Sigma^2$ be a non-minimal PMC surface in $\bar M=M^n(c)\times\mathbb{R}$. For the sake of simplicity we will consider only the cases $c=\pm 1$, i.e., $M^n(c)$ is either $\mathbb{S}^n$ or $\mathbb{H}^n$. It follows, from Corollary \ref{bicons}, that $\Sigma^2$ is biconservative if and only if 
$$
(\trace\bar R(\cdot,H)\cdot))^{\top}=0,
$$
where $H$ is the mean curvature vector field of our surface, which, using \eqref{eq:barR}, is equivalent to
\begin{equation}\label{eq:condbicons}
c\langle H,N\rangle T=0,
\end{equation}
where $T$ and $N$ are the tangent and the normal components of $\xi$, respectively.

As our result is of local nature, in the following, we will split our study as $|T|=0$, or $|T|=1$, or $|T|\in(0,1)$ on $\Sigma^2$.

\textbf{Case I.} Let us assume that $|T|=0$ at any point of $\Sigma^2$. This means that $\xi$ is orthogonal to our surface or, equivalently, that $\Sigma^2$ lies in $M^n(c)$. Obviously, equation \eqref{eq:condbicons} holds automatically in this case and $\Sigma^2$ is biconservative. Moreover, $\Sigma^2$ is a PMC surface in a space form and these surfaces were classified in \cite{Y}.

\textbf{Case II.} If $|T|=1$ on the surface, then $\xi$ is tangent to $\Sigma^2$ at any point, which means that $\Sigma^2$ is a vertical cylinder over a circle with curvature $\kappa=2|H|$ in $M^2(c)$ (see \cite{AdCT}). Moreover, $H$ is orthogonal to $\xi$ and then \eqref{eq:condbicons} implies that $\Sigma^2$ is biconservative in this case too.

\textbf{Case III.} Henceforth we shall assume that $|T|\in(0,1)$ at any point of the surface $\Sigma^2$. Also assume that $\Sigma^2$ is biconservative and orientable. We will see that, in this case, our surface has no pseudo-umbilical points.

First, from Theorem \ref{thm:Q}, it follows that either $H$ is umbilical everywhere and then $\Sigma^2$ lies in $M^n(c)$ (or equivalently $|T|=0$), which is a contradiction, or $H$ is not umbilical on the surface, which implies that $\Sigma^2$ lies in $M^4\times\mathbb{R}$ (see \cite{AdCT}). 

Next, since $|T|\neq 0$ on $\Sigma^2$, from \eqref{eq:condbicons}, we have that $H$ is orthogonal to $\xi$, that implies
$$
X(\langle H,\xi\rangle)=0,
$$
or, equivalently, as $\nabla^{\perp}H=0$ and $\bar\nabla\xi=0$,
$$
\langle A_HT,X\rangle=0,
$$
for any vector field $X$ tangent to the surface, so $A_HT=0$.

Now, let us consider the global, positive oriented orthonormal frame field $\{E_1=T/|T|,E_2\}$ on the surface and, since $A_HE_1=0$, we note that this frame field diagonalizes $A_H$. From the equation of Ricci 
$$
\langle R^{\perp}(X,Y)U,V\rangle=\langle [A_U,A_V]X,Y\rangle+\langle \bar R(X,Y)U,V\rangle,
$$
where $X$, $Y$ are tangent vector fields and $U$, $V$ are normal vector fields, we see, using the expression \eqref{eq:barR} of the curvature tensor $\bar R$ and the fact that $H$ is parallel, that $A_H$ and $A_U$ commute for any vector field $U$ normal to $\Sigma^2$, which shows that $\{E_1,E_2\}$ diagonalizes the second fundamental form $\sigma$ of our surface.

Next, consider the following decomposition of $\xi$
\begin{equation}\label{eq:xi}
\xi=\cos\theta E_1+\sin\theta E_3,
\end{equation}
where $\theta\in(0,\pi/2)$ is a local angle function, and $\{E_3=N/|N|,E_4,E_5=H/|H|\}$ is a global orthonormal frame field in the normal bundle.

First, we will prove the following lemma.

\begin{lemma}\label{lemma:calcul} The following equations hold on the surface $\Sigma^2$$:$
\begin{enumerate}
\item $\nabla E_1=\nabla E_2=0$$;$

\item $\theta=\cst$$;$

\item $\nabla^{\perp}_{E_1}E_3=-\cot\theta\sigma(E_1,E_1)$$;$

\item $\nabla^{\perp}_{E_2}E_3=0$$;$

\item $A_3=A_{E_3}=0$$;$

\item $\nabla^{\perp}_{E_1}(\sigma(E_2,E_2))=-c\cos\theta\sin\theta E_3$$;$

\item $\nabla^{\perp}_{E_2}(\sigma(E_2,E_2))=0$$.$
\end{enumerate}
Moreover, we have $c=1$, i.e., $\Sigma^2$ lies in $\mathbb{S}^4\times\mathbb{R}$, and $|\sigma(E_1,E_1)|=\sin\theta$$.$
\end{lemma}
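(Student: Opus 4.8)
The plan is to exploit the two structural facts at our disposal: the ambient field $\xi$ is parallel, $\bar\nabla\xi=0$, and the surface is PMC. First I would record the shape of $\sigma$ in the frame $\{E_1,E_2\}$. Since $\{E_1,E_2\}$ diagonalizes $\sigma$ and $A_HE_1=0$, writing $\sigma(E_1,E_1)=a_3E_3+a_4E_4+a_5E_5$ and using $\langle\sigma(E_1,E_1),H\rangle=0$ together with $2H=\sigma(E_1,E_1)+\sigma(E_2,E_2)=2|H|E_5$ gives $a_5=0$ and $\sigma(E_2,E_2)=-a_3E_3-a_4E_4+2|H|E_5$; thus everything is controlled by the two scalars $a_3,a_4$. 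Moreover, PMC forces $|H|=\cst$ and hence $\nabla^{\perp}E_5=0$, so the normal connection is carried by a single $1$-form $\eta$ with $\nabla^{\perp}_XE_3=\eta(X)E_4$ and $\nabla^{\perp}_XE_4=-\eta(X)E_3$.

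Next I would differentiate $\xi=\cos\theta\,E_1+\sin\theta\,E_3$ by Gauss and Weingarten and impose $\bar\nabla_X\xi=0$. Splitting into tangent and normal parts and testing against $X=E_1,E_2$ yields the first-order relations: writing $\nabla_XE_1=\omega(X)E_2$, one gets $\omega(E_1)=0$, $E_2(\theta)=0$, $E_1(\theta)=-a_3$, the coupling $\cos\theta\,\omega(E_2)+\sin\theta\,a_3=0$, together with $\eta(E_1)=-\cot\theta\,a_4$ and $\eta(E_2)=0$. This already delivers item (4) and the relations $\nabla_{E_1}E_1=\nabla_{E_1}E_2=0$ in (1), but it leaves $a_3$ and $\omega(E_2)$ coupled and undetermined; they cannot be resolved from $\bar\nabla\xi=0$ alone.

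The decisive step is the Codazzi equation. Using \eqref{eq:barR} I would compute $(\bar R(E_1,E_2)E_2)^{\perp}=-c\cos\theta\sin\theta\,E_3$ and equate it with $(\nabla^{\perp}_{E_1}\sigma)(E_2,E_2)-(\nabla^{\perp}_{E_2}\sigma)(E_1,E_2)$, expanding both sides in $\{E_3,E_4,E_5\}$ via the relations above. The key observation is that the $E_5$-component of the right-hand side equals $2\omega(E_2)|H|$, so since $|H|\neq 0$ it forces $\omega(E_2)=0$; this completes (1), and then the coupling relation yields $a_3=0$, hence $E_1(\theta)=0$ and, with $E_2(\theta)=0$, item (2). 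With $a_3=0$ items (3) and (5) are immediate, while the $E_4$-component gives $E_1(a_4)=0$. The $E_3$-component reads $-c\cos\theta\sin\theta=-\cot\theta\,a_4^2$, i.e. $a_4^2=c\sin^2\theta$; since the left side is nonnegative and $\sin^2\theta>0$, this forces $c=1$ and $|a_4|=\sin\theta=|\sigma(E_1,E_1)|$, the final assertion, and it also produces item (6). Finally, running Codazzi on $(\bar R(E_1,E_2)E_1)^{\perp}=0$ gives $E_2(a_4)=0$, which yields item (7) (and, with $E_1(a_4)=0$, shows $a_4$ is constant).

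The main obstacle is organizational rather than computational: one must first extract the underdetermined first-order system from $\bar\nabla\xi=0$ and only then recognize that the single scalar equation coming from the $E_5$-direction of Codazzi is exactly what breaks the $a_3$--$\omega(E_2)$ coupling. Keeping the normal frame under control — in particular using $\nabla^{\perp}E_5=0$ to reduce the normal connection to the one-form $\eta$ — is what makes the expansions tractable, and the fact that $c$ enters only through the $E_3$-component is precisely what pins down $c=1$.
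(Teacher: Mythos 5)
Your proof is correct, and its skeleton is the same as the paper's: differentiate $\xi=\cos\theta E_1+\sin\theta E_3$ with $\bar\nabla\xi=0$ to get the underdetermined first-order system, then bring in the Codazzi equation for $(\bar R(E_1,E_2)E_2)^{\perp}$, whose component along $H$ is what kills $\nabla_{E_2}E_1$. Indeed, your ``$E_5$-component equals $2\omega(E_2)|H|$'' step is precisely the paper's step of pairing \eqref{eq:6} with $H$ and using $E_1(\langle\sigma(E_2,E_2),H\rangle)=0$; from there both arguments obtain $a_3=0$, hence $A_3=0$, $\theta=\cst$, and items (3)--(6) in the same way. Where you genuinely diverge is the endgame. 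The paper proves $c=1$ and $|\sigma(E_1,E_1)|=\sin\theta$ via the Gauss equation \eqref{eq:Gauss}: it computes $K=c\sin^2\theta-\lambda^2$ and uses flatness (which follows from $\nabla E_1=\nabla E_2=0$) to force $\lambda^2=c\sin^2\theta$. You instead read the same relation $a_4^2=c\sin^2\theta$ off the $E_3$-component of the Codazzi identity combined with $\eta(E_1)=-\cot\theta\,a_4$, never invoking the Gauss equation at all. Likewise, the paper establishes item (7) early, by a direct computation using $\sigma(E_2,E_2)=2H-\sigma(E_1,E_1)$ and Codazzi with $(\bar R(E_2,E_1)E_1)^{\perp}=0$ before $\nabla_{E_2}E_1=0$ is known, whereas you obtain it last, from that same curvature identity, once the frame is known to be parallel (via $E_2(a_4)=0$). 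Both routes are complete; yours keeps all curvature input inside the Codazzi equation and is slightly more economical, in that item (6) and the sign constraint forcing $c=1$ fall out of one and the same identity, while the paper's route has the side benefit of making the flatness of $\Sigma^2$ explicit, which is the geometric fact underlying the third case of Theorem \ref{thm:main1}.
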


\begin{proof} From \eqref{eq:xi}, since $\bar\nabla_{E_1}\xi=0$, we get
$$
-E_1(\theta)\sin\theta E_1+\cos\theta\bar\nabla_{E_1}E_1+E_1(\theta)\cos\theta E_3+\sin\theta\bar\nabla_{E_1}E_3=0
$$
and then, since $\{E_1,E_2\}$ diagonalizes $\sigma$,
\begin{equation}\label{eq:1}
\nabla_{E_1}E_1=\nabla_{E_1}E_2=0,
\end{equation}
\begin{equation}\label{eq:2}
A_3E_1=A_{E_3}E_1=-E_1(\theta)E_1
\end{equation}
and
\begin{equation}\label{eq:3}
\nabla^{\perp}_{E_1}E_3=-\cot\theta(\sigma(E_1,E_1)+E_1(\theta)E_3).
\end{equation}

In the same way, from $\bar\nabla_{E_2}\xi=0$, one obtains $\nabla_{E_2}^{\perp}E_3=0$,
\begin{equation}\label{eq:4}
E_2(\theta)=0,
\end{equation}
and
\begin{equation}\label{eq:5}
\cos\theta\nabla_{E_2}E_1-\sin\theta A_3E_2=0.
\end{equation}

Next, we will compute $\nabla^{\perp}_{E_2}(\sigma(E_2,E_2))$ and $\nabla^{\perp}_{E_1}(\sigma(E_2,E_2))$. Using the Codazzi equation of $\Sigma^2$ in $\bar M$
\begin{equation}\label{eq:Codazzi}
(\bar R(X,Y)Z)^{\perp}=(\nabla^{\perp}_X\sigma)(Y,Z)-(\nabla^{\perp}_Y\sigma)(X,Z),
\end{equation}
where $X$, $Y$, $Z$ are tangent vector fields, the expression \eqref{eq:barR} of the curvature tensor $\bar R$, and equations \eqref{eq:1}, since $\nabla^{\perp}H=0$ and $\{E_1,E_2\}$ diagonalizes $\sigma$, we have
\begin{align*}
\nabla^{\perp}_{E_2}(\sigma(E_2,E_2))&=-\nabla^{\perp}_{E_2}(\sigma(E_1,E_1))=-((\nabla^{\perp}_{E_2}\sigma)(E_1,E_1)+2\sigma(E_1,\nabla_{E_2}E_1))\\
&=-((\nabla^{\perp}_{E_1}\sigma)(E_1,E_2)+2\sigma(E_1,\nabla_{E_2}E_1))+(\bar R(E_2,E_1)E_1)^{\perp}\\&=-2\sigma(E_1,\nabla_{E_2}E_1))=-2\langle\nabla_{E_2}E_1,E_2)\sigma(E_1,E_2)\\&=0
\end{align*}
and
\begin{align}\label{eq:6}
\nabla^{\perp}_{E_1}(\sigma(E_2,E_2))&=(\nabla^{\perp}_{E_1}\sigma)(E_2,E_2)-2\sigma(E_2,\nabla_{E_1}E_2)\\\nonumber
&=(\nabla^{\perp}_{E_2}\sigma)(E_1,E_2)+(\bar R(E_1,E_2)E_2)^{\perp}\\\nonumber &=-\sigma(E_2,\nabla_{E_2}E_1)-\sigma(E_1,\nabla_{E_2}E_2)-c\cos\theta\sin\theta E_3.
\end{align}

Now, since $A_HE_1=0$, we know that $\langle\sigma(E_2,E_2),H\rangle=2|H|^2$ and then 
$$
E_1(\langle\sigma(E_2,E_2),H\rangle)=0.
$$
The facts that $H$ is parallel and orthogonal to $E_3$, using \eqref{eq:6}, lead to
$$
\langle\sigma(E_2,\nabla_{E_2}E_1)+\sigma(E_1,\nabla_{E_2}E_2),H\rangle=0,
$$
or, equivalently, since $A_HE_1=0$,
$$
\langle\nabla_{E_2}E_1,E_2\rangle|H|^2=0,
$$
which means that
\begin{equation}\label{eq:7}
\nabla_{E_2}E_1=\nabla_{E_2}E_1=0.
\end{equation}

From equations \eqref{eq:1} and \eqref{eq:7}, we see that $\nabla E_1=\nabla E_2=0$. Moreover, since $\trace A_3=0$, from \eqref{eq:2} and \eqref{eq:5}, we have
$A_3E_2=E_1(\theta)E_2=0$ and then $A_3=0$ and $E_1(\theta)=0$. From \eqref{eq:4}, it follows that the function $\theta$ is constant.

Finally, the shape operator $A$ of the surface is given, with respect to $\{E_1,E_2\}$, by
$$
A_3=A_{\frac{N}{|N|}}=0,\quad A_4=\left(\begin{array}{cc}\lambda&0\\0&-\lambda\end{array}\right),\quad A_5=A_{\frac{H}{|H|}}=\left(\begin{array}{cc}0&0\\0&2|H|\end{array}\right),
$$
where $\lambda$ is a smooth function on $\Sigma^2$, and we have $\sigma(E_1,E_1)=\lambda E_4$. Then, from the Gauss equation of $\Sigma^2$ in $\bar M$
\begin{align}\label{eq:Gauss}
\langle R(X,Y)Z,W\rangle=&\langle\bar R(X,Y)Z,W\rangle+\langle\sigma(Y,Z),\sigma(X,W)\rangle\\\nonumber &-\langle\sigma(X,Z),\sigma(Y,W)\rangle,
\end{align}
where $X$, $Y$, $Z$, $W$ are tangent vector fields and $R$ is the curvature tensor of the surface, we obtain the Gaussian curvature $K$ of $\Sigma^2$ as
$$
K=c\sin^2\theta-\lambda^2.
$$

Since the equations $\nabla E_1=\nabla E_2=0$ imply that $\Sigma^2$ is flat, it follows that $\lambda^2=c\sin^2\theta$, which means that $c>0$ and, therefore, $c=1$, which completes the proof.
\end{proof}

\begin{remark} We note that if $\Sigma^2$ is a PMC biconservative surface in $M^4(c)\times\mathbb{R}$ with $|T|\in (0,1)$, then it lies in a totally geodesic submanifold $M^3(c)\times\mathbb{R}$ if and only if $A_4=0$. But, from the Gauss equation of the surface, we get that there are no PMC biconservative
surfaces in $M^3(c)\times\mathbb{R}$ with $|T|\in(0,1)$.
\end{remark}

Next, we consider the immersion of $\mathbb{S}^4\times\mathbb{R}$ in $\mathbb{R}^5\times\mathbb{R}$ and denote by $\widetilde\nabla$ the Levi-Civita connection on $\mathbb{R}^5\times\mathbb{R}$. Then the integral curves of $E_1$ and $E_2$, thought as curves in $\mathbb{R}^5\times\mathbb{R}$, are characterized by the following two lemmas.

\begin{lemma}\label{lemma:inte1} The integral curves $\delta$ of $E_1$ are helices in $\mathbb{R}^5\times\mathbb{R}$ with curvatures
$$
\kappa_1=\sin\theta\sqrt{1+\sin^2\theta}\quad\textnormal{and}\quad\kappa_2=\cos\theta\sqrt{1+\sin^2\theta},
$$
where $\theta=\cst\in(0,\pi/2)$.
\end{lemma}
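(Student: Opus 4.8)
The plan is to realize $\delta$ as an arc-length parametrized curve in $\mathbb{R}^5\times\mathbb{R}$ and compute its Frenet apparatus directly, feeding in the data collected in Lemma~\ref{lemma:calcul}. Since $\delta'=E_1$ is a unit field, $\delta$ is parametrized by arc length and $X_1=E_1$. The first thing I would record is the relation between the three connections $\widetilde\nabla$, $\bar\nabla$, and $\nabla$. Writing $\eta$ for the outward unit normal of $\mathbb{S}^4\times\mathbb{R}$ in $\mathbb{R}^5\times\mathbb{R}$ (the position vector in the $\mathbb{S}^4$ factor), a short computation of its shape operator yields the Gauss formula
\begin{equation*}
\widetilde\nabla_XY=\bar\nabla_XY-\bigl(\langle X,Y\rangle-\langle X,\xi\rangle\langle Y,\xi\rangle\bigr)\eta,
\end{equation*}
valid for $X,Y$ tangent to $\mathbb{S}^4\times\mathbb{R}$, together with $\widetilde\nabla_X\eta=X-\langle X,\xi\rangle\xi$. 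I would also note at the outset that $\lambda$ is constant: from $\lambda^2=\sin^2\theta$ with $\theta$ constant and $\lambda$ nowhere zero, differentiation forces $E_i(\lambda)=0$.

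The main computation then follows the Frenet recursion. For the first curvature, the Gauss formula of $\Sigma^2$ in $\bar M$ and Lemma~\ref{lemma:calcul}(1) give $\bar\nabla_{E_1}E_1=\sigma(E_1,E_1)=\lambda E_4$, so the formula above gives $\widetilde\nabla_{E_1}E_1=\lambda E_4-\sin^2\theta\,\eta$; hence $\kappa_1=|\widetilde\nabla_{E_1}E_1|=\sqrt{\lambda^2+\sin^4\theta}=\sin\theta\sqrt{1+\sin^2\theta}$ (using $\lambda^2=\sin^2\theta$), and $X_2=\kappa_1^{-1}(\lambda E_4-\sin^2\theta\,\eta)$. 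For the second curvature I would differentiate $X_2$, which requires $\widetilde\nabla_{E_1}E_4$ and $\widetilde\nabla_{E_1}\eta$. The Weingarten formula in $\bar M$ gives $\bar\nabla_{E_1}E_4=-A_4E_1+\nabla^{\perp}_{E_1}E_4=-\lambda E_1+\lambda\cot\theta\,E_3$, where $\nabla^{\perp}_{E_1}E_4=\lambda\cot\theta\,E_3$ is obtained by differentiating $\langle E_3,E_4\rangle=0$ and inserting Lemma~\ref{lemma:calcul}(3) together with $\nabla^{\perp}E_5=0$ (which holds since $\Sigma^2$ is PMC); moreover $\widetilde\nabla_{E_1}E_4=\bar\nabla_{E_1}E_4$ because $E_4\perp\xi$ makes the $\eta$-term vanish. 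Collecting terms and simplifying with $\lambda^2=\sin^2\theta$ gives $\widetilde\nabla_{E_1}X_2=-\kappa_1E_1+\cos\theta\sqrt{1+\sin^2\theta}\,E_3$, so that $X_3=E_3$ and $\kappa_2=\cos\theta\sqrt{1+\sin^2\theta}$.

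Finally, to conclude that $\delta$ is genuinely a helix of osculating order $3$, rather than a curve of higher order, I would verify the closing Frenet equation $\widetilde\nabla_{E_1}E_3=-\kappa_2X_2$. Using $A_3=0$ from Lemma~\ref{lemma:calcul}(5), the value $\nabla^{\perp}_{E_1}E_3=-\lambda\cot\theta\,E_4$, and $\widetilde\nabla_{E_1}E_3=\bar\nabla_{E_1}E_3-\bigl(0-\cos\theta\sin\theta\bigr)\eta$, one gets $\widetilde\nabla_{E_1}E_3=-\lambda\cot\theta\,E_4+\cos\theta\sin\theta\,\eta$, which matches $-\kappa_2X_2$ after substituting $\kappa_2/\kappa_1=\cot\theta$ and $\cot\theta\sin^2\theta=\cos\theta\sin\theta$. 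Since $\kappa_1$ and $\kappa_2$ are positive constants, this exhibits $\delta$ as a helix of order $3$ with the stated curvatures. I expect the only delicate points to be bookkeeping: pinning down the sign and exact form of the ambient second fundamental form encoded by $\eta$, and correctly extracting $\nabla^{\perp}_{E_1}E_4$ from the normal-connection data, after which everything reduces to algebraic simplification using $\lambda^2=\sin^2\theta$ and $\theta=\cst$.
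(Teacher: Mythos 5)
Your proposal is correct and follows essentially the same route as the paper: a direct Frenet computation of $\delta$ in $\mathbb{R}^5\times\mathbb{R}$ using the Gauss--Weingarten formulas for $\Sigma^2\subset\mathbb{S}^4\times\mathbb{R}\subset\mathbb{R}^5\times\mathbb{R}$ together with the data of Lemma \ref{lemma:calcul}, including the verification of the closing third Frenet equation $\widetilde\nabla_{E_1}E_3=-\kappa_2X_2$. The only (immaterial) difference is bookkeeping: you differentiate $\sigma(E_1,E_1)=\lambda E_4$ by computing $\nabla^{\perp}_{E_1}E_4$ from Lemma \ref{lemma:calcul}(3) and the PMC condition, whereas the paper differentiates $\sigma(E_1,E_1)=2H-\sigma(E_2,E_2)$ using Lemma \ref{lemma:calcul}(6).
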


\begin{proof} First, since $\nabla E_1=0$, we have
$$
\widetilde\nabla_{E_1}E_1=\sigma(E_1,E_1)-\langle E_1-\langle E_1,\xi\rangle\xi,E_1-\langle E_1,\xi\rangle\xi\rangle\eta=\sigma(E_1,E_1)-\sin^2\theta\eta,
$$
where $\eta$ is the unit vector field orthogonal to $\mathbb{S}^4$ in $\mathbb{R}^5$.

From the first Frenet equation $\widetilde\nabla_{E_1}E_1=\kappa_1X_2$ of the curve $\delta$, where $\{X_1=E_1,X_2,\ldots,X_r\}$ is the Frenet frame field along $\delta$, we get, using Lemma \ref{lemma:calcul},
\begin{equation}\label{eq:k1}
\kappa_1^2=\sin^2\theta(1+\sin^2\theta)=\cst,
\end{equation}
and then
\begin{equation}\label{eq:Frenet2}
\widetilde\nabla_{E_1}X_2=\frac{1}{\kappa_1}(\widetilde\nabla_{E_1}(\sigma(E_1,E_1))-\sin^2\theta\widetilde\nabla_{E_1}\eta).
\end{equation}

From Lemma \ref{lemma:calcul}, since $H$ is parallel and $\{E_1,E_2\}$ diagonalizes $\sigma$, one obtains
\begin{align}\label{eq:Frenet21}
\widetilde\nabla_{E_1}(\sigma(E_1,E_1))&=-A_{\sigma(E_1,E_1)}E_1-\nabla^{\perp}_{E_1}(\sigma(E_2,E_2))\\\nonumber &=-\sin^2\theta E_1+\cos\theta\sin\theta E_3.
\end{align}

We also have
\begin{equation}\label{eq:Frenet22}
\widetilde\nabla_{E_1}\eta=E_1-\langle E_1,\xi\rangle\xi=\sin^2\theta E_1-\cos\theta\sin\theta E_3.
\end{equation}

Replacing \eqref{eq:Frenet21} and \eqref{eq:Frenet22} in \eqref{eq:Frenet2}, we get
$$
\widetilde\nabla_{E_1}X_2=\frac{1}{\kappa_1}(1+\sin^2\theta)(-\sin^2\theta E_1+\cos\theta\sin\theta E_3)
$$
and, from the second Frenet equation $\widetilde\nabla_{E_1}X_2=-\kappa_1E_1+\kappa_2X_3$ of $\delta$ and \eqref{eq:k1}, it follows that $X_3=E_3$ and
$$
\kappa_2=\frac{\cos\theta\sin\theta(1+\sin^2\theta)}{\kappa_1}=\cos\theta\sqrt{1+\sin^2\theta}=\cst.
$$

Again using Lemma \ref{lemma:calcul} and the expressions of $\kappa_1$ and $\kappa_2$, we have
\begin{align*}
\widetilde\nabla_{E_1}E_3&=-A_3E_1+\nabla^{\perp}_{E_1}E_3-\langle E_1-\langle E_1,\xi\rangle\xi,E_3-\langle E_3,\xi\rangle\xi\rangle\eta\\&=-\cot\theta(\sigma(E_1,E_1)-\sin^2\theta\eta)\\&=-\kappa_2X_2,
\end{align*}
which means that $\delta$ is a helix, and we conclude.
\end{proof}

\begin{remark}\label{rem:1} In the proof of Lemma \ref{lemma:inte1} we have seen that, when $c=1$,
$$
\widetilde\nabla_{E_1}\eta=\sin^2\theta E_1-\cos\theta\sin\theta E_3\quad\textnormal{and}\quad\widetilde\nabla_{E_1}(\sigma(E_1,E_1)=-\sin^2\theta E_1+\cos\theta\sin\theta E_3.
$$
From the latter equation, we obtain
$$
\widetilde\nabla_{E_1}(\sigma(E_2,E_2))=2\widetilde\nabla_{E_1}H-\widetilde\nabla_{E_1}(\sigma(E_1,E_1))=\sin^2\theta E_1-\cos\theta\sin\theta E_3,
$$ 
since $\widetilde\nabla_{E_1}H=-A_HE_1+\nabla^{\perp}_{E_1}H-\langle E_1-\langle E_1,\xi\rangle\xi,H\rangle\eta=0$, and, therefore, 
$$
\widetilde\nabla_{E_1}\widetilde\nabla_{E_2}E_2=\widetilde\nabla_{E_1}(\sigma(E_2,E_2)-\eta)=0.
$$
\end{remark}

\begin{lemma}\label{lemma:inte2} The integral curves $\gamma$ of $E_2$ are plane circles in $\mathbb{R}^5\times\mathbb{R}$ with curvature $\kappa=\sqrt{1+4|H|^2+\sin^2\theta}$, where $\theta=\cst\in(0,\pi/2)$.
\end{lemma}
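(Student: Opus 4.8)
The plan is to follow the same template as Lemma \ref{lemma:inte1}: parametrize $\gamma$ by arc length, build its Frenet frame in $\mathbb{R}^5\times\mathbb{R}$, and read off the curvatures. First I would record the second fundamental form in the direction of $E_2$. From Lemma \ref{lemma:calcul} we have $\sigma(E_1,E_1)=\lambda E_4$ with $|\sigma(E_1,E_1)|=\sin\theta$, so $\lambda=\sin\theta$; combining $\sigma(E_2,E_2)=2H-\sigma(E_1,E_1)$ with $A_5=A_{H/|H|}$ then gives $\sigma(E_2,E_2)=-\sin\theta E_4+2|H|E_5$. Since $\xi=\cos\theta E_1+\sin\theta E_3$ we have $\langle E_2,\xi\rangle=0$, so the Gauss formula for $\mathbb{S}^4\times\mathbb{R}$ in $\mathbb{R}^5\times\mathbb{R}$ reduces to $\widetilde\nabla_{E_2}E_2=\bar\nabla_{E_2}E_2-\eta=\sigma(E_2,E_2)-\eta$, using $\nabla E_2=0$. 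This is the first Frenet equation $\widetilde\nabla_{E_2}E_2=\kappa X_2$, with $\kappa^2=\sin^2\theta+4|H|^2+1$ constant because $\theta=\cst$ and, the surface being PMC, $|H|=\cst$; here $X_2=\kappa^{-1}(-\sin\theta E_4+2|H|E_5-\eta)$.

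The core of the argument is to show that $\gamma$ has osculating order $2$, that is, $\widetilde\nabla_{E_2}X_2=-\kappa E_2$, which simultaneously forces $\gamma$ to be planar and to be a circle. To this end I would differentiate $X_2$ term by term. Using Weingarten and the fact that the $\eta$-correction in the Gauss formula vanishes (because $\langle E_2,\xi\rangle=\langle E_4,\xi\rangle=\langle E_5,\xi\rangle=0$, the last since $H\perp\xi$), one gets $\widetilde\nabla_{E_2}E_4=\sin\theta E_2+\nabla^{\perp}_{E_2}E_4$, $\widetilde\nabla_{E_2}E_5=-2|H|E_2$, and $\widetilde\nabla_{E_2}\eta=E_2$. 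The delicate point is that $\nabla^{\perp}_{E_2}E_4=0$: by skew-symmetry of the normal connection its $E_5$-component equals $-\langle E_4,\nabla^{\perp}_{E_2}E_5\rangle=0$ (as $\nabla^{\perp}H=0$), its $E_3$-component equals $-\langle E_4,\nabla^{\perp}_{E_2}E_3\rangle=0$ by part (4) of Lemma \ref{lemma:calcul}, and its $E_4$-component vanishes since $E_4$ is a unit field. Substituting these back yields
$$
\widetilde\nabla_{E_2}X_2=\kappa^{-1}(-\sin^2\theta-4|H|^2-1)E_2=-\kappa E_2,
$$
so no third curvature appears. Hence $\gamma$ is a Frenet curve of osculating order $2$ with constant $\kappa_1=\kappa$, i.e.\ a circle; being of osculating order $2$ it lies in a fixed $2$-plane, so it is a plane circle with curvature $\kappa=\sqrt{1+4|H|^2+\sin^2\theta}$.

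The step I expect to cost the most effort is pinning down $\nabla^{\perp}_{E_2}E_4$, since $E_4$ is the single normal direction not handled directly in Lemma \ref{lemma:calcul}; everything rests on recovering its normal-connection derivative from the skew-symmetry of $\nabla^{\perp}$ together with the already-established identities $\nabla^{\perp}_{E_2}E_3=0$ and $\nabla^{\perp}H=0$. After that, assembling $\widetilde\nabla_{E_2}\widetilde\nabla_{E_2}E_2$ and verifying the coefficient is exactly $-\kappa^2$ is routine.
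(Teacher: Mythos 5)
Your proof is correct and follows essentially the same route as the paper: the Gauss formula for $\mathbb{S}^4\times\mathbb{R}\subset\mathbb{R}^5\times\mathbb{R}$ gives $\widetilde\nabla_{E_2}E_2=\sigma(E_2,E_2)-\eta$, the first Frenet equation yields $\kappa^2=1+4|H|^2+\sin^2\theta$, and the second Frenet equation closes up as $\widetilde\nabla_{E_2}X_2=-\kappa E_2$. The only cosmetic difference is that you re-derive $\nabla^{\perp}_{E_2}E_4=0$ from metric compatibility of $\nabla^{\perp}$, part (4) of Lemma \ref{lemma:calcul}, and $\nabla^{\perp}H=0$, whereas the paper invokes part (7) of Lemma \ref{lemma:calcul}, i.e. $\nabla^{\perp}_{E_2}(\sigma(E_2,E_2))=0$, which is the same fact.
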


\begin{proof} First, we have $\widetilde\nabla_{E_2}E_2=\sigma(E_2,E_2)-\eta$ and then, from the first Frenet equation $\widetilde\nabla_{E_2}E_2=\kappa X_2$ of $\gamma$, where $\{X_1=E_2,X_2,\ldots, X_r\}$ is the Frenet frame field along $\gamma$, we get 
$$
\kappa^2=1+|\sigma(E_2,E_2)|^2=1+4|H|^2+\sin^2\theta=\cst,
$$
since, by Lemma \ref{lemma:calcul}, we know that $|\sigma(E_1,E_1)|=\sin\theta$ and $\sigma(E_1,E_1)$ is orthogonal to $H$.

Next, using Lemma \ref{lemma:calcul}, we obtain the second Frenet equation of $\gamma$ 
\begin{align*}
\widetilde\nabla_{E_2}X_2&=\frac{1}{\kappa}(\widetilde\nabla_{E_2}(\sigma(E_2,E_2))-\widetilde\nabla_{E_2}\eta)=\frac{1}{\kappa}(-A_{\sigma(E_2,E_2)}E_2-E_2)\\\nonumber &=\frac{1}{\kappa}(-2A_HE_2+A_{\sigma(E_1,E_1)}E_2-E_2)\\\nonumber &=-\kappa E_2,
\end{align*}
that shows that our curve is a circle.
\end{proof}

Now, we can state the main result of this section.

\begin{theorem}\label{thm:main1} Let $\Sigma^2$ be a PMC biconservative surface with mean curvature vector field $H$ in $\bar M=M^n(c)\times\mathbb{R}$, $c=\pm 1$ and $H\neq 0$. Then either
\begin{enumerate}
\item $\Sigma^2$ either is a minimal surface of an umbilical hypersurface of $M^n(c)$ or it is a CMC surface in a
$3$-dimensional umbilical submanifold of $M^n(c)$$;$ or

\item $\Sigma^2$ is a vertical cylinder over a circle in $M^2(c)$ with curvature $\kappa=2|H|$$;$ or

\item $\Sigma^2$ lies in $\mathbb{S}^4\times\mathbb{R}\subset\mathbb{R}^5\times\mathbb{R}$ and, as a surface in $\mathbb{R}^5\times\mathbb{R}$, is locally given by 
\begin{align*}
X(u,v)=&\frac{1}{a}\{C_3+\sin\theta(D_1\cos(au)+D_2\sin(au))\}+(u\cos\theta+b)\xi\\&+\frac{1}{\kappa}(C_1(\cos v-1)+C_2\sin v),
\end{align*}
where $\theta\in(0,\pi/2)$ is a constant, $a=\sqrt{1+\sin^2\theta}$, $b$ is a real constant, $\kappa=\sqrt{1+4|H|^2+\sin^2\theta}$, $C_1$ and $C_2$ are two constant orthonormal vectors in $\mathbb{R}^5\times\mathbb{R}$ such that $C_1\perp\xi$ and $C_2\perp\xi$, $C_3$ is a unit constant vector such that $\langle C_3,C_1\rangle=a/\kappa\in(0,1)$, $C_3\perp C_2$, and $C_3\perp\xi$, and $D_1$ and $D_2$ are two constant orthonormal vectors in the orthogonal complement of $\Span\{C_1,C_2,C_3,\xi\}$ in $\mathbb{R}^5\times\mathbb{R}$.
\end{enumerate}
\end{theorem}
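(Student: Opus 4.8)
The plan is to follow the trichotomy on $|T|$ that is already set up before the statement, disposing of the first two cases by citation and devoting the real work to the third. In the case $|T|\equiv 0$ the surface lies in $M^n(c)$ and is PMC there, so item (1) is exactly the classification of PMC surfaces in space forms from \cite{Y}. In the case $|T|\equiv 1$ the surface is a vertical cylinder, and as recorded above its generating curve is forced to be a circle of curvature $2|H|$, which is item (2). Everything then reduces to producing the explicit immersion when $|T|\in(0,1)$, where Lemma \ref{lemma:calcul} has already shown $c=1$, $\Sigma^2\subset\mathbb{S}^4\times\mathbb{R}\subset\mathbb{R}^5\times\mathbb{R}$, that the adapted frame is parallel ($\nabla E_1=\nabla E_2=0$) with $\theta=\cst$, and has pinned down all components of $\sigma$.

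First I would use $\nabla E_1=\nabla E_2=0$ to get $[E_1,E_2]=0$, so that there are local coordinates $(u,v)$ with $X_u=E_1$ and $X_v=E_2$, where $X:\Sigma^2\to\mathbb{R}^5\times\mathbb{R}$ is the composed immersion and $\widetilde\nabla$ is the flat connection of $\mathbb{R}^5\times\mathbb{R}$. The decisive simplification is that the mixed derivative vanishes: computing $\widetilde\nabla_{E_1}E_2$ by the Gauss formula, using $\nabla_{E_1}E_2=0$, $\sigma(E_1,E_2)=0$, and $\langle E_1-\cos\theta\,\xi,E_2\rangle=0$, gives $\widetilde\nabla_{E_1}E_2=0$, i.e. $X_{uv}=0$, so $X$ separates as $X(u,v)=f(u)+g(v)$ up to a constant. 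Moreover, since $\langle X_u,\xi\rangle=\langle E_1,\xi\rangle=\cos\theta$ and $\langle X_v,\xi\rangle=\langle E_2,\xi\rangle=0$, the $\xi$-component of $X$ is exactly $u\cos\theta+b$; writing the spherical part as $\eta=X-(u\cos\theta+b)\xi$, which is the unit normal (equivalently the position vector) of $\mathbb{S}^4$, reduces the problem to integrating $\eta(u,v)$ subject to $|\eta|\equiv1$.

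Next I would integrate the two one-variable pieces. Along $u$, Lemma \ref{lemma:inte1} says $f$ is a helix of order three with constant curvatures $\kappa_1=\sin\theta\,a$ and $\kappa_2=\cos\theta\,a$, $a=\sqrt{1+\sin^2\theta}$; solving this constant-coefficient Frenet system (the motion taking place in the subspace $\Span\{E_1,E_3,\eta\}$, with the $\xi$-direction carrying the linear growth) produces the terms $\frac1a\{C_3+\sin\theta(D_1\cos au+D_2\sin au)\}+(u\cos\theta+b)\xi$. Along $v$, Lemma \ref{lemma:inte2} says $g$ is a plane circle of curvature $\kappa=\sqrt{1+4|H|^2+\sin^2\theta}$, whose integration gives $\frac1\kappa(C_1(\cos v-1)+C_2\sin v)$ after the harmless reparametrization and normalization making it vanish at $v=0$. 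Assembling $f+g$ yields the stated $X(u,v)$, with $C_1,C_2$ spanning the circle plane and $C_3,D_1,D_2$ arising as integration constants of the helix.

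The part requiring the most care is the algebraic matching of these integration constants, since $f$ and $g$ are not independent: the common normal direction $E_3$ enters both integrations through $\xi=\cos\theta E_1+\sin\theta E_3$, and the two pieces must glue into a single immersion into the fixed sphere $\mathbb{S}^4$. Concretely, writing $\eta=A(u)+B(v)$ with $A(u)=\frac1a\{C_3+\sin\theta(D_1\cos au+D_2\sin au)\}$ and $B(v)=\frac1\kappa(C_1(\cos v-1)+C_2\sin v)$, I would impose $|\eta|^2\equiv1$. Using $C_1\perp C_2$, $C_3\perp C_1,C_2$, $|C_3|=1$, and $D_1,D_2\in\Span\{C_1,C_2,C_3,\xi\}^{\perp}$, one finds $|A|^2=\frac1{a^2}(1+\sin^2\theta)=1$, while the cross term $2\langle A,B\rangle$ and $|B|^2$ cancel identically precisely when $\langle C_3,C_1\rangle=a/\kappa$. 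This single coupling relation, together with the orthonormality and the conditions $C_i,D_j\perp\xi$ listed in the statement, is exactly what forces $|\eta|\equiv1$ and pins down all the constants. Checking that these relations are simultaneously implied by, and consistent with, the full frame data of Lemma \ref{lemma:calcul} is the main bookkeeping obstacle; once it is settled, the three items follow.
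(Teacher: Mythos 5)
Your proposal is correct and follows the same skeleton as the paper: dispose of $|T|\equiv 0$ and $|T|\equiv 1$ by citation, then in the case $|T|\in(0,1)$ combine Lemma \ref{lemma:calcul} with the two Frenet Lemmas \ref{lemma:inte1} and \ref{lemma:inte2} to integrate the surface explicitly. The difference lies in how the final integration is organized. The paper parametrizes the surface as a one-parameter family of circles flowing along the helix $\delta$, proves $C_1'(u)=C_2'(u)=0$ (via Remark \ref{rem:1}), solves the ODE $\delta_1''+a^2\delta_1=C$ for the spherical part of $\delta$, and then reads off \emph{all} the relations among the integration constants ($|C|=a$, $\langle C,C_1\rangle=a^2/\kappa$, $C\perp C_2$, $C\perp\xi$, $D_i\perp\Span\{C_1,C_2,C_3\}$) directly from the frame data, e.g. $C=\sigma(E_1,E_1)+\eta$ and $\delta_1'=E_1-\cos\theta\,\xi\perp C_1,C_2,C_3$. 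You instead obtain the separated form $X=f(u)+g(v)$ at once from $X_{uv}=\widetilde\nabla_{E_1}E_2=0$ (a computation which is indeed valid, and which slightly streamlines the paper's constancy argument for $C_1,C_2$), and you recover the coupling $\langle C_3,C_1\rangle=a/\kappa$ by imposing the constraint $|\eta|^2\equiv 1$. This works: expanding $|\eta|^2=|\delta_1|^2+2\langle\delta_1,B\rangle+|B|^2$ and separating Fourier modes in $u$ and $v$ yields not only the coupling but also $\langle C_3,C_2\rangle=0$ and $\langle D_i,C_j\rangle=0$, so the "bookkeeping obstacle" you flag is genuinely settleable from the sphere constraint alone; the paper's route through the frame data is the alternative, and arguably cleaner since the relations appear as identities rather than as conditions to be solved.

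Two slips to repair in your write-up. First, you list "$C_3\perp C_1,C_2$" among the hypotheses used to compute the cancellation, which contradicts the conclusion $\langle C_3,C_1\rangle=a/\kappa\neq 0$ you then derive; only $C_3\perp C_2$ may be assumed (or better, derived), and the $C_3$--$C_1$ inner product must be left free until the constraint determines it. Second, the parenthetical claim that the helix motion takes place in $\Span\{E_1,E_3,\eta\}$ is imprecise: the osculating $3$-space of $\delta$ is $\Span\{E_1,X_2,E_3\}$ with $X_2\in\Span\{\sigma(E_1,E_1),\eta\}$, and what your integration actually uses is only that the axis of the helix is $(\kappa_2X_1+\kappa_1X_3)/a=\cos\theta\,E_1+\sin\theta\,E_3=\xi$, with frequency $a$ and amplitude $\sin\theta/a$; stating it this way removes the imprecision and is exactly what produces the claimed form of $f(u)$.
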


\begin{proof} We only have to study the case when the surface $\Sigma^2$ is not pseudo-umbilical and $|T|\in(0,1)$. In order to do that, we will use the same method employed in \cite{CMOP} to study biconservative surfaces in space forms. 

We consider again the local orthonormal frame field $\{E_1=T/|T|,E_2\}$ and let $\gamma$ be an integral curve of $E_2$ parametrized by arc-length. Then, from Lemma \ref{lemma:inte2}, we know that $\gamma$ is a circle with curvature $\kappa=\sqrt{1+4|H|^2+\sin^2\theta}$ in $\mathbb{R}^5\times\mathbb{R}$ and, therefore, it can be written as
\begin{equation}\label{eq:gamma}
\gamma(s)=c_0+c_1\cos(\kappa s)+c_2\sin(\kappa s),\quad c_0,c_1,c_2\in\mathbb{R}^5\times\mathbb{R},
\end{equation}
where $|c_1|=|c_2|=1/\kappa$ and $\langle c_1,c_2\rangle=0$.

At an arbitrary point $p_0\in\Sigma^2$ we consider $\delta(u)$ an integral curve of $E_1$, with $\delta(0)=p_0$, and the flow $\phi$ of $E_1$ near $p_0$. We note that $\delta(u)$ is a helix characterized in Lemma \ref{lemma:inte1}. Now, for all $u\in(-\omega,\omega)$ and $s\in(-\epsilon,\epsilon)$, we have
$$
\phi_{\delta(u)}(s)=c_0(u)+c_1(u)\cos(\kappa s)+c_2(u)\sin(\kappa s),
$$
with
$$
\delta(u)=c_0(u)+c_1(u),\quad |c_1(u)|=|c_2(u)|=\frac{1}{\kappa},\quad\langle c_1(u),c_2(u)\rangle=0,
$$
and, therefore, the surface can be parametrized locally by
$$
X(u,s)=\phi_{\delta(u)}(s).
$$

Next, $X(u,s)$ can be reparametrized using $u$ and $v=\kappa s$ as the new parameters, with $u\in(-\omega,\omega)$ and $v\in(-\kappa\epsilon,\kappa\epsilon)$, and we have
$$
X(u,v)=c_0(u)+\frac{1}{\kappa}(C_1(u)\cos v+C_2(u)\sin v),
$$
where $C_1(u)=\kappa c_1(u)$ and $C_2(u)=\kappa c_2(u)$.

Since at $v=0$ the integral curves of $E_2$ start from $\delta$, we have 
$$
\delta(u)=X(u,0)=c_0(u)+\frac{1}{\kappa}C_1(u)
$$
and then
\begin{equation}\label{eq:X}
X(u,v)=\delta(u)+\frac{1}{\kappa}(C_1(u)(\cos v-1)+C_2(u)\sin v).
\end{equation}

From \eqref{eq:gamma} it follows that $C_2=\kappa c_2=\gamma'(0)=E_2(\gamma(0))$, that is 
$$
C_2(u)=E_2(\delta(u)),
$$
and also $-\kappa^2c_1=\gamma''(0)=(\widetilde\nabla_{E_2}E_2)(\gamma(0))$, which gives 
$$
C_1(u)=\kappa c_1(u)=-\frac{1}{\kappa}(\widetilde\nabla_{E_2}E_2)(\delta(u)).
$$ 

Now, using Lemma \ref{lemma:calcul} and Remark \ref{rem:1}, we have 
$$
\frac{dC_1}{du}=-\frac{1}{\kappa}\widetilde\nabla_{E_1}\widetilde\nabla_{E_2}E_2=0\quad\textnormal{and}\quad\frac{dC_2}{du}=\widetilde\nabla_{E_1}E_2=0,
$$
which means that $C_1$ and $C_2$ are constant orthonormal vectors and that the image of parametrization \eqref{eq:X} is given by a $1$-parameter family of circles centered in $\delta(u)-(1/\kappa)C_1$ and passing through the points of $\delta(u)$ lying in planes parallel to the one spanned by $C_1$ and $C_2$. Moreover, from Lemma \ref{lemma:calcul}, one also obtains that $C_1\perp\xi$ and $C_2\perp\xi$. 

Next, we will determine the explicit equation of $\delta(u)$. In order to do that, let us consider the vector field 
$$
C(u)=\delta''(u)+(1+\sin^2\theta)\eta(\delta(u))
$$ 
along $\delta(u)$. It is then easy to verify, using Remark \ref{rem:1}, that $C'(u)=0$, which means that $C(u)=C$ is a constant vector. From Lemmas \ref{lemma:calcul}, \ref{lemma:inte1}, and \ref{lemma:inte2}, we also get that $\langle C,C_1\rangle=a^2/\kappa$, where $a=\sqrt{1+\sin^2\theta}$, $C\perp C_2$, $C\perp\xi$, and $|C|=a$. Moreover, $C$, $C_1$, and $C_2$ are linearly independent.

Next, consider $\delta_1(u)=\delta(u)-\langle\delta(u),\xi\rangle\xi$. Since $\widetilde\nabla_{E_1}\xi=0$, it follows that 
$$
\delta_1'(u)=E_1-\cos\theta\xi
$$ 
and then $|\delta_1'(u)|=\sin\theta$. Differentiating $\delta_1'(u)$ along $\delta(u)$, since $\widetilde\nabla_{E_1}E_1=\delta''(u)=C-a^2\eta(\delta(u))=C-a^2\delta_1(u)$, we can see that $\delta_1(u)$ satisfies
$$
\delta_1''(u)+a^2\delta_1(u)=C,
$$
that shows that
$$
\delta_1(u)=\frac{1}{a^2}C+\frac{1}{a}(F_1\cos(au)+F_2\sin(au)),
$$
where $F_1$ and $F_2$ are two constant vectors in $\mathbb{R}^5\times\mathbb{R}$. Since $\delta_1'(u)$ is orthogonal to $\xi$, we have that $F_1\perp\xi$ and $F_2\perp\xi$. Also, from $|\delta_1'(u)|=\sin\theta$, one obtains that $F_1\perp F_2$ and $|F_1|=|F_2|=\sin\theta$. Then, considering $C_3=(1/a)C$, $D_1=(1/\sin\theta)F_1$ and $D_2=(1/\sin\theta)F_2$, we can write
$$
\delta_1(u)=\frac{1}{a}(C_3+\sin\theta(D_1\cos(au)+D_2\sin(au))),
$$
where $C_3$, $D_1$ and $D_2$ are unit constant vectors such that $D_1\perp D_2$. It follows that
$$
\delta_1'(u)=-D_1\sin(au)+D_2\cos(au),
$$
which, taking into account that $\delta_1'=E_1-\cos\theta\xi$ is orthogonal to $C_1$, $C_2$, and $C_3$, implies that $D_1$ and $D_2$ are vectors in the orthogonal complement of $\Span\{C_1,C_2,C_3\}$ in $\mathbb{R}^5\times\mathbb{R}$. 

Finally, since $(d/du)(\langle\delta(u),\xi\rangle)=\langle E_1,\xi\rangle=\cos\theta$ along $\delta(u)$, we have $\langle\delta(u),\xi\rangle=u\cos\theta+b$, where $b$ is a real constant. Hence, we conclude that $\delta(u)$ is given by
$$
\delta(u)=\frac{1}{a}\{C_3+\sin\theta(D_1\cos(au)+D_2\sin(au))\}+(u\cos\theta+b)\xi,
$$
which completes the proof.
\end{proof}

\begin{remark} We note that surfaces given by the third case of Theorem \ref{thm:main1} lie in the Riemannian product of a small hypersphere of $\mathbb{S}^4$ with $\mathbb{R}$. In order to see this, let us consider $X_1(u,v)=X(u,v)-\langle X(u,v),\xi\rangle\xi$ and the constant vector $\widetilde C$ in $\mathbb{R}^5$, orthogonal to $\xi$, given by $\widetilde{C}=(1/a)C_3-(1/\kappa)C_1$. Then, is easy to verify that $\langle X_1(u,v)-\widetilde C,\widetilde C\rangle=0$ and that $X_1(u,v)$ lies in $\mathbb{S}^4$, which shows that $X_1(u,v)$ actually lies in $\mathbb{S}^4\cap\pi$, where $\pi$ is a hyperplane of $\mathbb{R}^5$ that passes through $\widetilde C$ such that $\widetilde C$ is orthogonal to $\pi$. Moreover, since $|X_1(u,v)-\widetilde C|^2=(a^2+\kappa^2\sin^2\theta)/a^2\kappa^2$, we get that $X(u,v)$ lies in $\mathbb{S}^3(\widetilde C,\sqrt{a^2+\kappa^2\sin^2\theta}/a\kappa)\times\mathbb{R}$, where $\mathbb{S}^3(\widetilde C,\sqrt{a^2+\kappa^2\sin^2\theta}/a\kappa)$ is the $3$-dimensional sphere in the hyperplane $\pi$, centered in $\widetilde C$ and with radius $\sqrt{a^2+\kappa^2\sin^2\theta}/a\kappa$.
\end{remark}

\begin{remark} Lemma \ref{lemma:calcul} implies that the angle between a PMC biconservative surface given by the third case of Theorem \ref{thm:main1} and $\xi$ is constant. PMC surfaces with this property in spaces of type $M^n(c)\times\mathbb{R}$ were classified in \cite{DF}. However, here we use a different method that, as we have seen, allows us to find the explicit equation of PMC biconservative surfaces in the third case of the theorem.
\end{remark} 

\begin{remark} By similar arguments to those used in Lemma \ref{lemma:calcul}, it can be proved that PMC biconservative surfaces in $M^4(c)\times\mathbb{R}$, with $c\neq 0$ an arbitrary constant, that are not pseudo-umbilical nor vertical cylinders exist only when $c>0$. The local equations of such surfaces in $\mathbb{R}^5\times\mathbb{R}$ can be obtained working in the same way as in Theorem \ref{thm:main1}.
\end{remark}

\begin{remark} We note that, since all non-minimal PMC biharmonic surfaces in $\mathbb{S}^n\times\mathbb{R}$ that do not lie in $\mathbb{S}^n$ are vertical cylinders (see \cite[Theorem~5.6]{FOR}), the surfaces described in the third case of Theorem \ref{thm:main1} are not biharmonic.
\end{remark}

From Theorem \ref{thm:main1}, we know that the mean curvature vector field $H$ of a PMC biconservative surface in $M^n(c)\times\mathbb{R}$ is orthogonal to $\xi$. Let us now consider a CMC biconservative surface $\Sigma^2$ in $M^n(c)\times\mathbb{R}$ with $H$ orthogonal to $\xi$. As we will show in the next section, in general, $\Sigma^2$ is a not a PMC surface. The following result, however, highlights a particular case when these conditions imply that $H$ is parallel. 

\begin{proposition} Let $\Sigma^2$ be a genus zero CMC biconservative surface in $M^n(c)\times\mathbb{R}$ with mean curvature vector field $H$ orthogonal to $\xi$. Then $\Sigma^2$ is pseudo-umbilical and it lies in $M^n(c)$. Moreover, when $n=4$, $\Sigma^2$ is a PMC surface.
\end{proposition}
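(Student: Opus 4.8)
The plan is to prove the three conclusions in turn, using Theorem \ref{thm:Q} together with the genus hypothesis for pseudo-umbilicity, a maximum-principle argument for the inclusion in $M^n(c)$, and the Codazzi equation for the parallelism of $H$ when $n=4$. First I would establish pseudo-umbilicity. Since $\Sigma^2$ is CMC and biconservative, Theorem \ref{thm:Q} tells us that the $(2,0)$-part $Q^{(2,0)}$ of the Hopf form $Q(X,Y)=\langle\sigma(X,Y),H\rangle$ is a holomorphic quadratic differential. Because $\Sigma^2$ has genus zero it is a compact surface conformally equivalent to the sphere, which admits no nonzero holomorphic quadratic differential; hence $Q^{(2,0)}\equiv0$. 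In a local orthonormal frame $\{e_1,e_2\}$ this is equivalent to $\langle A_He_1,e_1\rangle=\langle A_He_2,e_2\rangle$ together with $\langle A_He_1,e_2\rangle=0$, i.e. $A_H=\mu\,\mathrm{Id}$; taking the trace and using $\trace A_H=2|H|^2$ forces $\mu=|H|^2$, so $\Sigma^2$ is pseudo-umbilical.

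Next I would show that $\Sigma^2$ lies in $M^n(c)$. Let $h$ be the restriction to $\Sigma^2$ of the projection onto the $\mathbb{R}$-factor; its gradient is the tangential part $T$ of $\xi$. From $\bar\nabla\xi=0$ and the Weingarten formula one obtains $\nabla_XT=A_NX$, whence $\Delta h=\trace A_N=2\langle H,N\rangle$. Since $H$ is orthogonal to $\xi$ and $T$ is tangent, $\langle H,N\rangle=\langle H,\xi\rangle=0$, so $h$ is harmonic. As the genus-zero surface $\Sigma^2$ is compact, $h$ is constant; therefore $T=\grad h=0$, which means $\xi$ is normal along $\Sigma^2$ and the surface lies in a slice $M^n(c)\times\{h_0\}\cong M^n(c)$.

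Finally, for $n=4$ I would prove that $H$ is parallel. Now $\Sigma^2$ is a pseudo-umbilical CMC surface in the space form $M^4(c)$; in the adapted normal frame $\{e_3=H/|H|,e_4\}$ pseudo-umbilicity gives $A_3=|H|\,\mathrm{Id}$ with $|H|$ constant and $A_4$ trace-free, and $\nabla^{\perp}H=|H|\,s\,e_4$ with $s(\cdot)=\langle\nabla^{\perp}_{\cdot}e_3,e_4\rangle$, so the goal is $s\equiv0$. Taking the $e_3$-component of the Codazzi equation (in a space form $\sigma$ is a Codazzi tensor) and using $A_3=|H|\,\mathrm{Id}$, the connection terms cancel and one is left with $h^4_{ij}\,s(e_k)=h^4_{ik}\,s(e_j)$ for all indices, where $h^4_{ij}=\langle\sigma(e_i,e_j),e_4\rangle$. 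Expanding these relations yields $(a^2+b^2)\,s\equiv0$, where $a,b$ are the entries of the trace-free operator $A_4$; hence at each point either $A_4=0$ or $s=0$.

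The main obstacle is then purely a globalization issue. On the open set $\{A_4\neq0\}$ the relation above gives $\nabla^{\perp}H=0$ directly, while on the interior of $\{A_4=0\}$ the surface is totally umbilical, $\sigma(X,Y)=\langle X,Y\rangle H$, and a second application of Codazzi again forces $\nabla^{\perp}H=0$. These two open sets are dense in $\Sigma^2$, their complement being the nowhere-dense boundary $\partial\{A_4=0\}$; since $\nabla^{\perp}H$ is continuous and vanishes on a dense set, it vanishes everywhere, so $\Sigma^2$ is PMC. The delicate point, which this continuity-plus-density argument is designed to handle, is precisely this patching across $\partial\{A_4=0\}$.
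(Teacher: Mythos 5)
Your proposal is correct, but it takes a genuinely different route from the paper in two of its three steps. The pseudo-umbilicity argument (Theorem \ref{thm:Q} plus the absence of nonzero holomorphic quadratic differentials in genus zero) coincides with the paper's. For the inclusion in $M^n(c)$, however, the paper stays inside the holomorphic-differential framework: it introduces a second quadratic form $\mathcal Q(X,Y)=\langle X,\xi\rangle\langle Y,\xi\rangle$, shows $\partial_{\bar z}\big(\mathcal Q(\partial_z,\partial_z)\big)=\lambda^2\langle\partial_z,\xi\rangle\langle H,\xi\rangle=0$ using $\bar\nabla_{\partial_{\bar z}}\partial_z=\frac{1}{2}\lambda^2H$ and $H\perp\xi$, and concludes $\mathcal Q^{(2,0)}\equiv 0$ by genus zero, whence $\langle\partial_z,\xi\rangle=0$, i.e. $T=0$. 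Your argument instead makes the height function $h$ harmonic ($\Delta h=\trace A_N=2\langle H,N\rangle=0$) and hence constant by compactness; this is more elementary and needs only compactness rather than genus zero, although that extra generality is moot since genus zero is already consumed in the first step. For the final step the paper simply cites a theorem of Chen \cite{C} on pseudo-umbilical CMC surfaces of codimension two in space forms, whereas you reprove that theorem: your Codazzi computation is right (the $e_3$-component of \eqref{eq:Codazzi}, with the curvature term vanishing in a space form, gives $(a^2+b^2)\,s\equiv0$), the totally umbilical argument on the interior of $\{A_4=0\}$ is right, and the continuity-plus-density patching across $\partial\{A_4=0\}$ --- precisely the delicate point the citation hides --- is handled correctly, since the boundary of a closed set is closed with empty interior, so your two open sets are indeed dense. (One cosmetic caveat: the frame $e_3=H/|H|$ presupposes $H\neq0$; if $H=0$ the PMC conclusion is trivial, so nothing is lost.) In sum, the paper's route buys brevity and uniformity of method; yours buys a self-contained proof that makes explicit where CMC, codimension two, and genus zero actually enter.
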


\begin{proof} Let us consider local isothermal coordinates $(U;x,y)$. Then we have $ds^2=\lambda^2(dx^2+dy^2)$ for some positive function
$\lambda$ on $U$ and $\{\partial/\partial x,\partial/\partial y\}$ is positively oriented. We will denote
$$
z=x+{\rm i}y,\quad \partial_z=\frac{\partial}{\partial z}=\frac{1}{2}\Big(\frac{\partial}{\partial x}-{\rm i}\frac{\partial}{\partial y}\Big),\quad \partial_{\bar z}=\frac{\partial}{\partial \bar z}=\frac{1}{2}\Big(\frac{\partial}{\partial x}+{\rm i}\frac{\partial}{\partial y}\Big).
$$

From Theorem \ref{thm:Q} we have that the $(2,0)$-part of the quadratic form $Q$ is holomorphic, that implies that $Q(\partial z,\partial z)$ vanishes, since the genus of $\Sigma^2$ is zero. Hence, $\Sigma^2$ is pseudo-umbilical.

Next, we define the quadratic form $\mathcal Q$ on our surface by 
$$
\mathcal Q(x,y)=\langle X,\xi\rangle\langle Y,\xi\rangle.
$$

It is easy to verify that $\bar\nabla_{\partial\bar z}\partial z=(1/2)\lambda^2 H$ and then we get that
$$
\partial\bar z(\mathcal Q(\partial z,\partial z))=\lambda^2\langle\partial z,\xi\rangle\langle H,\xi\rangle=0,
$$
i.e., the $(2,0)$-part of $\mathcal Q$ is holomorphic and, therefore, vanishes. We have just proved that $\partial z$ is orthogonal to $\xi$, which means that $\Sigma^2$ lies in $M^n(c)$.

Finally, when $n=4$, since $\Sigma^2$ is pseudo-umbilical, we use a result in \cite{C} to conclude.
\end{proof}

\section{CMC biconservatitve surfaces in $M^3(c)\times\mathbb{R}$}

As we have seen in Theorem \ref{thm:main1}, there are no PMC biconservative surfaces in $M^3(c)\times\mathbb{R}$, where $c=\pm 1$, that do not lie in $M^3(c)$ nor are
vertical cylinders. It is then interesting to see if there is possible to find examples of CMC biconservative surfaces $\Sigma^2$ with $|T|\in(0,1)$ in these spaces. 

We first note that it can be easily verified that a CMC surface in $M^3(c)\times\mathbb{R}$ with $|T|=0$ is biconservative since it actually lies in $M^3(c)$, and also that a CMC surface with $|T|=1$ and constant mean curvature $|H|$ in $M^3(c)\times\mathbb{R}$ is a vertical cylinder over a curve in $M^3(c)$ with constant first curvature $\kappa_1=2|H|$ and, therefore, a biconservative surface. In both cases, the mean curvature vector field is orthogonal to $\xi$. 

When $|T|\in(0,1)$ we have the following characterization of CMC biconservative surfaces in $M^3(c)\times\mathbb{R}$ whose mean curvature vector field $H$ is orthogonal to $\xi$.

\begin{theorem}\label{thm:biCMC} Let $\Sigma^2$ be a CMC biconservative surface in $M^3(c)\times\mathbb{R}$, $c\neq 0$, with mean curvature vector field $H\neq 0$ orthogonal to $\xi$ and $|T|\in(0,1)$. Then $\Sigma^2$ is flat and it is locally given by $X=X(u,v)$, where $X:D\subset\mathbb{R}^2\rightarrow M^3(c)\times\mathbb{R}$ is an isometric immersion, $D$ is an open set in $\mathbb{R}^2$, and either
\begin{enumerate}
\item $\Sigma^2$ is pseudo-umbilical, $c<0$, $|H|^2=-c(1-|T|^2)$, the integral curve of $X_u$ is a helix such that $\langle X_u,\xi\rangle=|T|$, with curvatures $\kappa_1^1=|H|$ and $\kappa_2^1=\sqrt{-c}|T|$, and the integral curve of $X_v$ is a circle such that $\langle X_v,\xi\rangle=0$, with curvature $\kappa_1^2=|H|$$;$ or

\item $|H|^2>-c(1-|T|^2)$ and the integral curves of $X_u$ and $X_v$ are helices in $M^3(c)\times\mathbb{R}$ satisfying
$$
\langle X_u,\xi\rangle=a\quad\textnormal{and}\quad\langle X_v,\xi\rangle=b,
$$
where $a$ and $b$ are two real constants such that 
$$
0<a^2+b^2=|T|^2<1\quad\textnormal{and}\quad|H|^2+c(1-a^2-b^2)>0,
$$ 
and with curvatures
$$
\kappa_1^1=|H|+\sqrt{|H|^2+c(1-a^2-b^2)},\quad\kappa_2^1=\frac{|a|}{\sqrt{1-a^2-b^2}}\kappa_1^1
$$ 
and
$$
\kappa_1^2=\Big||H|-\sqrt{|H|^2+c(1-a^2-b^2)}\Big|,\quad\kappa_2^2=\frac{|b|}{\sqrt{1-a^2-b^2}}\kappa_1^2,
$$
respectively.
\end{enumerate}
\end{theorem}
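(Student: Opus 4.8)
The plan is to adapt the moving-frame analysis of Lemma \ref{lemma:calcul} to the non-parallel setting. Since $\bar M=M^3(c)\times\mathbb{R}$ is four-dimensional, the normal bundle of $\Sigma^2$ is a plane bundle, and because $H\perp\xi$ forces $H\perp N$, I would use the global frame $\{E_1=T/|T|,E_2\}$ tangent to $\Sigma^2$ together with $\{E_3=N/|N|,E_4=H/|H|\}$ normal to it, writing $\xi=\cos\theta\,E_1+\sin\theta\,E_3$ with $\cos\theta=|T|\in(0,1)$. The mean curvature condition then reads $\trace A_{E_3}=0$ and $\trace A_{E_4}=2|H|$. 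First I would reduce biconservativity via Corollary \ref{bicons}: the term $\grad|H|^2$ vanishes because $|H|$ is constant, while the very computation that produced \eqref{eq:condbicons} shows $(\trace\bar R(\cdot,H)\cdot)^{\top}=c\langle H,N\rangle T=0$ since $H\perp\xi$. Hence biconservativity is equivalent to $\trace A_{\nabla^{\perp}_{\cdot}H}(\cdot)=0$.

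Next I would extract the structure equations from $\bar\nabla_{E_i}\xi=0$, as in Lemma \ref{lemma:calcul}, but \emph{without} assuming $\nabla^{\perp}H=0$. Separating components gives
$$
A_{E_3}=\begin{pmatrix}-E_1(\theta)&-E_2(\theta)\\-E_2(\theta)&E_1(\theta)\end{pmatrix},
$$
together with expressions for $\nabla^{\perp}_{E_i}E_3$ in terms of the entries of $A_{E_4}$ and $\cot\theta$. Feeding the latter into $\trace A_{\nabla^{\perp}_{\cdot}H}(\cdot)=0$ turns biconservativity into the purely algebraic relation $A_{E_3}A_{E_4}E_1=0$. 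The crucial dichotomy is whether $A_{E_4}E_1=0$: if it does, the structure equations force $\nabla^{\perp}E_3=\nabla^{\perp}E_4=0$, so $H$ is parallel, which is impossible by Theorem \ref{thm:main1} (there are no PMC biconservative surfaces in $M^3(c)\times\mathbb{R}$ with $|T|\in(0,1)$). Therefore $A_{E_4}E_1\neq0$, and since the matrix of the system $A_{E_3}A_{E_4}E_1=0$ in $(E_1(\theta),E_2(\theta))$ has determinant $|A_{E_4}E_1|^2\neq0$, I get $E_1(\theta)=E_2(\theta)=0$. Thus $\theta$ is constant, $A_{E_3}=0$, and the remaining structure equations give $\nabla E_1=\nabla E_2=0$, so $\Sigma^2$ is flat.

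With $A_{E_3}=0$, the Gauss equation \eqref{eq:Gauss} (using $\langle\bar R(E_1,E_2)E_2,E_1\rangle=c\sin^2\theta$) yields $0=K=c(1-|T|^2)+\det A_{E_4}$, so the eigenvalues of $A_{E_4}$ are the constants $\mu_{1,2}=|H|\pm\sqrt{|H|^2+c(1-|T|^2)}$; the discriminant equals $((\mu_1-\mu_2)/2)^2\geq 0$ automatically, and vanishes exactly in the pseudo-umbilical case, which then forces $|H|^2=-c(1-|T|^2)$ and hence $c<0$. To see that the principal directions $\{f_1,f_2\}$ of $A_{E_4}$ make a constant angle $\psi$ with $E_1$, I would use the Codazzi equation \eqref{eq:Codazzi}: with $\nabla E_i=0$ and $A_{E_3}=0$ its $E_4$-component is a Cauchy--Riemann system for the entries of $A_{E_4}$ (equivalently, holomorphicity of the Hopf form of Theorem \ref{thm:Q}), and combined with $\trace A_{E_4},\det A_{E_4}=\cst$ this gives $\psi=\cst$. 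In the pseudo-umbilical case $A_{E_4}=|H|\,\id$ and one simply takes $f_1=E_1,f_2=E_2$.

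Finally I would identify the integral curves. Since $\{f_1,f_2\}$ is a constant rotation of the parallel frame $\{E_1,E_2\}$ it is itself parallel and commuting, producing local coordinates $(u,v)$ with $X_u=f_1,X_v=f_2$ and $\langle X_u,\xi\rangle=a,\langle X_v,\xi\rangle=b$ constant, $a^2+b^2=|T|^2$. Computing the Frenet data of the $f_1$-curve in $M^3(c)\times\mathbb{R}$ gives $\bar\nabla_{f_1}f_1=\sigma(f_1,f_1)=\mu_1E_4$, then $\bar\nabla_{f_1}E_4=-\mu_1f_1+\nabla^{\perp}_{f_1}E_4$ with $\nabla^{\perp}_{f_1}E_4=\frac{a\mu_1}{\sqrt{1-a^2-b^2}}E_3$, and finally $\bar\nabla_{f_1}E_3=\nabla^{\perp}_{f_1}E_3$ proportional to $E_4$; this exhibits the curve as a helix with $\kappa_1^1=|\mu_1|$ and $\kappa_2^1=\frac{|a|}{\sqrt{1-a^2-b^2}}\kappa_1^1$, and symmetrically for the $f_2$-curve, recovering every curvature in the statement (in the pseudo-umbilical case the $f_2$-curve degenerates to a circle and $\kappa_2^1$ reduces to $\sqrt{-c}\,|T|$). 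The main obstacle is the second step: because $H$ is genuinely non-parallel the normal connection does not vanish, so everything hinges on converting biconservativity into $A_{E_3}A_{E_4}E_1=0$ and excluding the parallel case through Theorem \ref{thm:main1} in order to force $\theta$ constant; once flatness is secured, the Gauss and Codazzi equations reduce the rest to bookkeeping.
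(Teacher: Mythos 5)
Your proof is correct, and it shares the paper's overall skeleton --- Corollary \ref{bicons}, exclusion of the PMC case through Theorem \ref{thm:main1}, flatness via the Gauss equation \eqref{eq:Gauss}, parallel commuting frames, and Frenet computations --- but three of your intermediate reductions genuinely differ from the paper's. (1) To get the key fact $A_N=0$, the paper argues directly: Theorem \ref{thm:main1} gives $\nabla^{\perp}H\neq 0$ on an open set, so biconservativity, viewed as the homogeneous linear system \eqref{eq:system} in $(\omega_3^4(E_1),\omega_3^4(E_2))$, has a nontrivial solution, forcing $\det A_N=0$; combined with $\trace A_N=0$ this gives $A_N=0$, with no appeal to the structure equations of $\xi$ or to a special tangent frame. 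You instead tie everything to the frame $E_1=T/|T|$, encode $A_N$ in $(E_1(\theta),E_2(\theta))$, rewrite biconservativity as $A_NA_HE_1=0$, and eliminate the branch $A_HE_1=0$ by Theorem \ref{thm:main1}; this yields $\theta=\cst$, $A_N=0$ and $\nabla E_1=\nabla E_2=0$ in one stroke, where the paper re-derives constancy and flatness separately in each case from the Codazzi equations and $\langle H/|H|,\xi\rangle=0$. (2) For the pseudo-umbilical dichotomy the paper invokes Theorem \ref{thm:Q}; you use that, once flatness and $\theta=\cst$ are known, the discriminant $|H|^2+c(1-|T|^2)$ is a constant, hence identically zero or never zero --- more elementary, and it produces $c<0$ and $|H|^2=-c(1-|T|^2)$ in the umbilical case for free. (3) The paper diagonalizes $A_H$ from the start and shows that frame is parallel via Codazzi; you keep the $T$-aligned parallel frame and show via the Cauchy--Riemann form of Codazzi (together with constant trace and determinant) that the principal directions are a constant rotation of it --- equivalent content, and your curvature computations reproduce the stated $\kappa^i_j$ exactly. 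One point to tighten: in your dichotomy, $A_HE_1=0$ at a single point does not produce a PMC surface; you should argue that the interior of the set $\{A_HE_1=0\}$ would be a PMC biconservative piece with $|T|\in(0,1)$, impossible by Theorem \ref{thm:main1}, so $\{A_HE_1\neq 0\}$ is open and dense and $E_i(\theta)=0$ extends everywhere by continuity. Since this is the same open-set convention the paper itself uses for $U=\{\nabla^{\perp}H\neq 0\}$, it is a presentational matter rather than a gap.
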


\begin{proof} Since our surface is biconservative, it follows, from Theorem \ref{thm:main1}, that it cannot have parallel mean curvature vector field. Therefore, $\nabla^{\perp}H\neq 0$, which means that there exists an open subset $U\subset\Sigma^2$ such that $\nabla^{\perp}H\neq 0$ at any point $p\in U$. Let us now consider a local orthonormal frame field $\{E_1,E_2\}$ on $U$ and an orthonormal frame field $\{E_3=H/|H|,E_4=N/|N|\}$ in the normal bundle of $\Sigma^2$. Then $\{E_1,E_2,E_3,E_4\}$ can be extended to local orthonormal frame field on an open subset of $M^3(c)\times\mathbb{R}$. Denote by $\omega^A_B$ the corresponding connection $1$-forms on this subset given by
$$
\bar\nabla_XE_A=\omega_A^B(X)E_B.
$$
Then, from Corollary \ref{bicons}, we get that the biconservative equation becomes
$$
\trace A_{\nabla_{\cdot}^{\perp}H}(\cdot)=|H|(\omega_3^4(E_1)A_4E_1+\omega_3^4(E_2)A_4E_2)=0,
$$
that is equivalent to
\begin{equation}\label{eq:system}
\begin{cases}
\omega_3^4(E_1)\langle A_4E_1,E_1\rangle+\omega_3^4(E_2)\langle A_4E_2,E_1\rangle=0\\
\omega_3^4(E_1)\langle A_4E_1,E_2\rangle+\omega_3^4(E_2)\langle A_4E_2,E_2\rangle=0.
\end{cases}
\end{equation}
Since $\nabla^\perp H\neq 0$, we have $(\omega_3^4(E_1))^2+(\omega_3^4(E_2))^2\neq 0$, which, using \eqref{eq:system}, implies that 
$$
\langle A_4E_1,E_1\rangle\langle A_4E_2,E_2\rangle-\langle A_4E_2,E_1\rangle\langle A_4E_1,E_2\rangle=0.
$$

Next, the fact that $E_4$ is orthogonal to $H$ shows that $\trace A_4=0$ and then, since $A_4$ is symmetric, one obtains
$$
|A_4|^2=-2\langle A_4E_1,E_1\rangle\langle A_4E_2,E_2\rangle+2\langle A_4E_2,E_1\rangle\langle A_4E_1,E_2\rangle=0,
$$
i.e., $A_4=0$. We note that, since $\bar\nabla\xi=0$ and $N=|N|E_4$, we also have $\nabla T=A_N=0$ and $|T|,|N|\in(0,1)$ are constants.

We know, from Theorem \ref{thm:Q}, that $H$ either is umbilical at any point of $\Sigma^2$, and then $\Sigma^2$ is pseudo-umbilical, or $H$ is umbilical only on a closed set without interior points. In the second case, $H$ is not umbilical on an open dense connected set $W$. 

Let us first treat the case when our surface is pseudo-umbilical. Then $\{E_1,E_2\}$ diagonalizes $A_3$ and, moreover, since $|T|\in(0,1)$, we can choose $E_1=T/|T|$, which implies that $\nabla E_1=\nabla E_2=0$. 

Using the Codazzi equation \eqref{eq:Codazzi} of $\Sigma^2$ in $M^3(c)\times\mathbb{R}$, first with $X=E_1$, $Y=Z=E_2$ and then with $X=Z=E_1$, $Y=E_2$, and taking the inner product with $E_4$, one obtains
\begin{equation}\label{eq:omega1}
\omega_3^4(E_1)=-\frac{c|T||N|}{|H|}=\cst\quad\textnormal{and}\quad\omega_3^4(E_2)=0.
\end{equation}

Next, since $\langle E_3,\xi\rangle=0$, we have $\langle\bar\nabla_{E_1}E_3,\xi\rangle=0$, which gives
\begin{equation}\label{eq:omega2}
\omega_3^4(E_1)=\frac{|T||H|}{|N|}.
\end{equation}

From \eqref{eq:omega1} and \eqref{eq:omega2} one sees that $|H|^2=-c|N|^2$, that implies $c<0$ and, using the Gauss equation \eqref{eq:Gauss}, that the surface is flat. Moreover, one obtains 
\begin{equation}\label{eq:omega3}
\omega_3^4(E_1)=\pm\sqrt{-c}|T|. 
\end{equation}

As we have seen, we have $\nabla E_1=\nabla E_2=0$ and then $[E_1,E_2]=0$, which means that there exists a local parametrization $X=X(u,v)$ of $\Sigma^2$ such that $X_u=E_1$ and $X_v=E_2$. 

In the following, we shall determine the curvatures of the integral curves $\gamma_1$ and $\gamma_2$ of $X_u$ and $X_v$, respectively. 

From the first Frenet equation $\bar\nabla_{E_1}E_1=\kappa_1^1X^1_2$ of $\gamma_1$, since $\bar\nabla_{E_1}E_1=\sigma(E_1,E_1)=|H|E_3$, it follows that the first curvature of $\gamma_1$ is $\kappa_1^1=|H|$ and $X^1_2=E_3$. The second Frenet equation $\bar\nabla_{E_1}X_2^1=-\kappa_1^1E_1+\kappa_2^1X^1_3$, together with 
$$
\bar\nabla_{E_1}X^1_2=\bar\nabla_{E_1}E_3=-A_3E_1+\nabla^{\perp}_{E_1}E_3=-|H|E_1+\omega_3^4(E_1)E_4
$$
and \eqref{eq:omega3}, leads to $\kappa_2^1=|\omega_3^4(E_1)|=\sqrt{-c}|T|$ and $X^1_3=(\omega_3^4(E_1)/|\omega_3^4(E_1)|)E_4$. Finally, the third Frenet equation of $\gamma_1$ is 
$$
\bar\nabla_{E_1}X^1_3=\frac{\omega_3^4(E_1)}{|\omega_3^4(E_1)|}\bar\nabla_{E_1}E_4=-|\omega_3^4(E_1)|E_3=-\kappa_2^1X^1_2.
$$

The first Frenet equation $\bar\nabla_{E_2}E_2=\kappa_1^2X^2_2$ of $\gamma_2$ and $\bar\nabla_{E_2}E_2=\sigma(E_2,E_2)=|H|E_3$ give $\kappa_1^2=|H|$ and $X_2^2=E_3$. Then, the second Frenet equation $\bar\nabla_{E_2}X_2^2=-\kappa_1^2E_2+\kappa_2^2X^2_3$ and \eqref{eq:omega1} imply that $\kappa_2^2=0$, which shows that $\gamma_2$ is a circle.

Let us now consider the case when $\Sigma^2$ is not pseudo-umbilical.

First, we choose $E_1$ and $E_2$ such that $A_3E_i=\lambda_i E_i$, $i=1,2$, and $\lambda_1>\lambda_2$. Since $A_4=0$, we have
$$
\sigma(E_1,E_1)=\lambda_1E_3,\quad\sigma(E_1,E_2)=0,\quad\sigma(E_2,E_2)=\lambda_2E_3
$$
and also
$$
\begin{cases}
\nabla_{E_1}E_1=\omega_1^2(E_1)E_2,\quad\nabla_{E_1}E_2=-\omega_1^2(E_1)E_1\\
\nabla_{E_2}E_1=\omega_1^2(E_2)E_1,\quad\nabla_{E_2}E_2=-\omega_1^2(E_2)E_1.
\end{cases}
$$

Next, we again use the Codazzi equation \eqref{eq:Codazzi} with $X=Z=E_1$ and $Y=E_2$, to obtain, taking the inner product first with $E_3$ and then with $E_4$, 
\begin{equation}\label{eq:Codazzi1}
E_2(\lambda_1)=(\lambda_1-\lambda_2)\omega_1^2(E_1)
\end{equation}
and
\begin{equation}\label{eq:Codazzi2}
\lambda_1\omega_3^4(E_2)+c\langle T,E_2\rangle|N|=0.
\end{equation}
In the same way, this time taking $X=E_1$ and $Y=Z=E_2$, we get
\begin{equation}\label{eq:Codazzi3}
E_1(\lambda_2)=(\lambda_1-\lambda_2)\omega_1^2(E_2)
\end{equation}
and
\begin{equation}\label{eq:Codazzi4}
\lambda_2\omega_3^4(E_1)+c\langle T,E_1\rangle|N|=0.
\end{equation}

From $\langle E_3,\xi\rangle=0$ we have $\langle\bar\nabla_{E_1}E_3,\xi\rangle=0$ and $\langle\bar\nabla_{E_2}E_3,\xi\rangle=0$, that are
\begin{equation}\label{eq:C5}
\lambda_1\langle T,E_1\rangle-\omega_3^4(E_1)|N|=0
\end{equation}
and
\begin{equation}\label{eq:C6}
\lambda_2\langle T,E_2\rangle-\omega_3^4(E_2)|N|=0.
\end{equation}

Now, from \eqref{eq:Codazzi2}, \eqref{eq:Codazzi4}, \eqref{eq:C5}, and \eqref{eq:C6} it follows 
$$
\lambda_1\lambda_2+c|N|^2=0,
$$
which, using the Gauss equation \eqref{eq:Gauss}, shows that $\Sigma^2$ is flat. Moreover, since $\lambda_1+\lambda_2=2|H|$, we get
\begin{equation}\label{eq:lambda}
\lambda_i=|H|\pm\sqrt{|H|^2+c|N|^2}=\cst,\quad i=1,2.
\end{equation}

From \eqref{eq:Codazzi2}, \eqref{eq:Codazzi4}, and \eqref{eq:lambda} one sees that 
$$
\begin{cases}
\omega_3^4(E_1)=-\dfrac{ca\sqrt{1-a^2-b^2}}{|H|-\sqrt{|H|^2+c(1-a^2-b^2)}}\\\omega_3^4(E_2)=-\dfrac{cb\sqrt{1-a^2-b^2}}{|H|+\sqrt{|H|^2+c(1-a^2-b^2)}},
\end{cases}
$$
where $a=\langle T,E_1\rangle$ and $b=\langle T,E_2\rangle$.

The fact that $\lambda_1$ and $\lambda_2$ are constants, together with \eqref{eq:Codazzi1} and \eqref{eq:Codazzi3}, leads to $\omega_1^2(E_1)=\omega_1^2(E_2)=0$, i.e., $\nabla E_1=\nabla E_2=0$. Since $\nabla T=0$, we can also see that $\omega_3^4(E_i)$, $i=1,2$, are constants and then that the Ricci equation does not provide any other supplementary information about $\Sigma^2$.

Finally, since $[E_1,E_2]=0$, there exists a local parametrization $X=X(u,v)$ of $\Sigma^2$ such that $X_u=E_1$ and $X_v=E_2$. We conclude by computing the curvatures of the integral curves of $X_u$ and $X_v$ in the same way as in the case when the surface is pseudo-umbilical.
\end{proof}

\begin{theorem}\label{thm:CMCbiharmonic} If $\Sigma^2$ is a CMC biharmonic surface in $M^3(c)\times\mathbb{R}$, $c\neq 0$, with mean curvature vector field $H\neq 0$ orthogonal to $\xi$ and $|T|\in(0,1)$, then $c>0$, $b^2>a^2$, and $\Sigma^2$ is one of the non-pseudo-umbilical CMC biconservative surfaces in Theorem \ref{thm:biCMC}, with
\begin{equation}\label{eq:Hc}
|H|^2=\frac{c(1-a^2-b^2)(b^2-a^2)^2}{4(1-a^2)(1-b^2)}.
\end{equation}
\end{theorem}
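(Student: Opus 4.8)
The plan is to take the non-pseudo-umbilical surfaces already produced in Theorem~\ref{thm:biCMC}(2) and impose the \emph{normal} part of the bitension field from Theorem~\ref{thm:split}, since biconservativity only exhausts the tangential part. Throughout I work with the adapted frame $\{E_1,E_2\}$ and $\{E_3=H/|H|,E_4=N/|N|\}$ constructed in the proof of Theorem~\ref{thm:biCMC}, where I already have $A_4=0$, the eigenvalues $\lambda_i=|H|\pm\sqrt{|H|^2+c|N|^2}$ of $A_3$ constant, $\nabla E_1=\nabla E_2=0$, $|T|,|N|\in(0,1)$ constant, and the connection forms $\omega_3^4(E_i)$ constant with the explicit expressions in terms of $a=\langle T,E_1\rangle$, $b=\langle T,E_2\rangle$. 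The biharmonic normal equation from Theorem~\ref{thm:split} reads
$$
-\Delta^{\perp}H+\trace\sigma(\cdot,A_H\cdot)+\trace(\bar R(\cdot,H)\cdot)^{\perp}=0,
$$
so I first compute each of the three terms in this frame.

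First I would evaluate $\Delta^{\perp}H$. Since $|H|$ is constant and $H=|H|E_3$, writing $H$ in terms of the frame gives $\nabla^{\perp}_{E_i}H=|H|\,\omega_3^4(E_i)E_4$, and because $\nabla E_i=0$ and the $\omega_3^4(E_i)$ are constants, the rough Laplacian $\Delta^{\perp}H=-\sum_i\nabla^{\perp}_{E_i}\nabla^{\perp}_{E_i}H$ reduces to $-|H|\big((\omega_3^4(E_1))^2+(\omega_3^4(E_2))^2\big)E_3$ plus an $E_4$--contribution coming from $\nabla^{\perp}_{E_i}E_4=-\omega_3^4(E_i)E_3$; I expect the $E_4$ terms to cancel, leaving a multiple of $E_3=H/|H|$. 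For the curvature term, $A_H=|H|A_3$ has eigenvalues $|H|\lambda_i$, so $\trace\sigma(\cdot,A_H\cdot)=|H|(\lambda_1^2+\lambda_2^2)E_3$ directly from $\sigma(E_i,E_i)=\lambda_iE_3$. Finally, $\trace(\bar R(\cdot,H)\cdot)^{\perp}$ is computed from \eqref{eq:barR} with $Z=H$ normal and $\langle H,\xi\rangle=0$; the surviving terms involve $c$, $|H|$, and the quantities $\langle E_i,\xi\rangle=\langle T,E_i\rangle=a,b$, and its normal projection will again be along $E_3$ after using $A_4=0$ and $\langle E_3,\xi\rangle=0$.

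Collecting the three contributions, the normal equation becomes a single scalar identity equating a multiple of $E_3$ to zero. Substituting the explicit values $\lambda_i=|H|\pm\sqrt{|H|^2+c(1-a^2-b^2)}$ and the formulas for $\omega_3^4(E_i)$ from the proof of Theorem~\ref{thm:biCMC}, I expect this scalar equation to simplify—after clearing the denominators $\lambda_1=|H|+\sqrt{\cdots}$ and $\lambda_2=|H|-\sqrt{\cdots}$ and using $\lambda_1\lambda_2=-c|N|^2=-c(1-a^2-b^2)$—to precisely the relation \eqref{eq:Hc}. The positivity $|H|^2>0$ forces the right-hand side of \eqref{eq:Hc} to be positive; since $1-a^2-b^2=|N|^2>0$ and the squares $(b^2-a^2)^2$, $(1-a^2)(1-b^2)$ are positive, this pins down the sign as $c>0$. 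The strict inequality $b^2>a^2$ should emerge from comparing the two curvatures $\kappa_2^1,\kappa_2^2$ (equivalently $\omega_3^4(E_1)$ versus $\omega_3^4(E_2)$) once the biharmonic condition is imposed, ruling out the degenerate $a=b$ case.

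The main obstacle will be the algebraic bookkeeping in the third paragraph: the expressions for $\omega_3^4(E_i)$ carry the opposite $\pm$ signs from $\lambda_1,\lambda_2$ in their denominators, so combining $(\omega_3^4(E_1))^2+(\omega_3^4(E_2))^2$ with $\lambda_1^2+\lambda_2^2$ and the curvature term into a common-denominator identity requires care to avoid sign errors, and one must verify that the $E_4$--components genuinely cancel so that the equation really is scalar along $E_3$. I would organize this by first recording $\lambda_1+\lambda_2=2|H|$, $\lambda_1\lambda_2=-c(1-a^2-b^2)$, and $\lambda_1^2+\lambda_2^2=4|H|^2+2c(1-a^2-b^2)$, then substituting $\omega_3^4(E_1)=-ca\sqrt{1-a^2-b^2}/\lambda_2$ and $\omega_3^4(E_2)=-cb\sqrt{1-a^2-b^2}/\lambda_1$, so that the whole normal equation collapses to a polynomial identity in $|H|^2,a^2,b^2,c$ whose solution is \eqref{eq:Hc}.
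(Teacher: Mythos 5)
Your strategy---imposing the normal component of the bitension field from Theorem~\ref{thm:split} on the surfaces produced by Theorem~\ref{thm:biCMC}, computed in the adapted frame where $A_4=0$ and the $\lambda_i$ and $\omega_3^4(E_i)$ are constants---is exactly the paper's, and the computation you outline does collapse to a scalar equation along $E_3$ (the curvature term equals $-c(2-a^2-b^2)H$, and no $E_4$ component survives precisely because the $\omega_3^4(E_i)$ are constant). But there is a genuine gap: you start directly from case (2) of Theorem~\ref{thm:biCMC} and never rule out case (1). A CMC biharmonic surface is in particular CMC biconservative, so Theorem~\ref{thm:biCMC} only tells you it belongs to one of the two families; the statement you are proving asserts it \emph{must} be non-pseudo-umbilical, and that requires an argument. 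The paper disposes of it with the same computation: in the pseudo-umbilical case one has $A_H=|H|^2\id$, $(\omega_3^4(E_1))^2=-c|T|^2$, $\omega_3^4(E_2)=0$, and the normal equation reduces to $2|H|^2-2c=0$, which is impossible because case (1) forces $c<0$ (there $|H|^2=-c(1-|T|^2)$) and $H\neq 0$. Without this step your argument does not prove the theorem as stated.

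Two smaller points. First, a sign slip: with the convention you wrote, $\Delta^{\perp}H=-\sum_i\nabla^{\perp}_{E_i}\nabla^{\perp}_{E_i}H$, one gets $\Delta^{\perp}H=+|H|\big((\omega_3^4(E_1))^2+(\omega_3^4(E_2))^2\big)E_3$, not the minus sign you claim; what must come out positive in the equation of Theorem~\ref{thm:split} is the term $-\Delta^{\perp}H$ (the paper's $\Delta$ is $\trace(\nabla^{\perp})^2$). Flipping this sign changes the scalar equation and would not lead to \eqref{eq:Hc}, so this is exactly the bookkeeping you flagged and it must be settled, not left ambiguous. Second, you do not need to ``compare the two curvatures'' to get $b^2>a^2$: after clearing denominators, using $\lambda_1+\lambda_2=2|H|$, $\lambda_1\lambda_2=-c(1-a^2-b^2)$, and $(\omega_3^4(E_i))^2=a^2\lambda_1^2/|N|^2$ resp. $b^2\lambda_2^2/|N|^2$, the normal equation becomes
$$
|H|(2-a^2-b^2)+\sqrt{|H|^2+c(1-a^2-b^2)}\,(a^2-b^2)=0,
$$
and since $|H|>0$, $2-a^2-b^2>0$, and the radical is positive, this forces $a^2-b^2<0$ outright (in particular $a^2=b^2$ would force $|H|=0$). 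Squaring and using $(2-a^2-b^2)^2-(b^2-a^2)^2=4(1-a^2)(1-b^2)$ then yields \eqref{eq:Hc}, and positivity of $|H|^2$ gives $c>0$, which also eliminates the non-pseudo-umbilical surfaces with $c<0$. This is precisely the paper's final step.
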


\begin{proof} The normal part of the bitension field $\tau_2$ of surfaces in Theorem \ref{thm:biCMC}, whose general expression is given by Theorem~\ref{thm:split}, is
$$
\tau_2^{\perp}=2(2|H|^2-2c)H,
$$
in the pseudo-umbilical case, and 
\begin{align*}
\tau_2^{\perp}=&2\Big\{4|H|^2-c(a^2+b^2)+\frac{1}{1-a^2-b^2}\Big(a^2\big(|H|+\sqrt{|H|^2+c(1-a^2-b^2)}\big)^2\\ &+b^2\big(|H|-\sqrt{|H|^2+c(1-a^2-b^2)}\big)^2\Big)\Big\}H,
\end{align*}
when the surface is not pseudo-umbilical. 

When $c<0$, it is easy to see that $\tau_2^{\perp}$ does not vanish, which means that our surfaces are not biharmonic in this case.

When $c>0$, we have that $\tau_2^{\perp}=0$ is equivalent to 
$$
|H|(2-a^2-b^2)+\sqrt{|H|^2+c(1-a^2-b^2)}(a^2-b^2)=0,
$$
from where it follows that $b^2>a^2$ and the mean curvature of the surface is given by equation \eqref{eq:Hc}.
\end{proof}

\section{CMC biconservative surfaces in Hadamard manifolds}

In order to prove some compactness results for CMC biconservative surfaces in $M^n(c)\times\mathbb{R}$, with $c<0$, we will work in a more general setting where the ambient space is a Hadamard manifold, i.e., a Riemannian manifold that is complete simply-connected and has non-positive sectional curvature everywhere. 

We will begin by showing that a CMC biconservative surface in a Riemannian manifold satisfies a Simons type equation.

\begin{theorem}\label{thm:delta} 
Let $\Sigma^2$ be a non-minimal CMC biconservative surface in a Riemannian manifold $\bar M$ with mean curvature vector field $H$ and shape operator $A$. 
Then
$$
\frac{1}{2}\Delta|\phi_H|^2=2K|\phi_H|^2+|\nabla\phi_H|^2,
$$
where $\phi_H=A_H-|H|^2\id$ is the traceless part of $A_H$ and $K$ is the Gaussian curvature of the surface.
\end{theorem}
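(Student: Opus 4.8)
The plan is to reduce the statement to a Bochner (Weitzenb\"och) identity for the trace-free symmetric $(1,1)$-tensor $\phi_H$ and to extract the intrinsic curvature $K$ from a commutation of second covariant derivatives, the decisive structural input being that $\phi_H$ is divergence-free. First I would record the algebra coming from the CMC hypothesis: since $|H|$ is constant and $\trace A_H=\langle\trace\sigma,H\rangle=2|H|^2$, the tensor $\phi_H=A_H-|H|^2\id$ is trace-free and $\nabla\phi_H=\nabla A_H$. So the rough Laplacian decomposition $\frac12\Delta|\phi_H|^2=\langle\Delta\phi_H,\phi_H\rangle+|\nabla\phi_H|^2$ reduces the theorem to proving $\Delta\phi_H=2K\phi_H$, where $\Delta=\trace\nabla^2$ is the connection Laplacian.

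Next I would set up the Codazzi equation for $A_H$. Taking the inner product of \eqref{eq:Codazzi} with $H$ and using $\langle\sigma(X,Y),H\rangle=\langle A_HX,Y\rangle$ together with the identity $\langle(\nabla^\perp_X\sigma)(Y,Z),H\rangle=\langle(\nabla_XA_H)Y,Z\rangle-\langle A_{\nabla^\perp_XH}Y,Z\rangle$, I obtain
$$
(\nabla_XA_H)Y-(\nabla_YA_H)X=A_{\nabla^\perp_XH}Y-A_{\nabla^\perp_YH}X+\Theta(X,Y),
$$
where $\Theta(X,Y)$ is the tangential field defined by $\langle\Theta(X,Y),Z\rangle=\langle\bar R(X,Y)Z,H\rangle$.

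The decisive step is to show $\Div A_H=0$. Tracing the previous identity against an orthonormal frame $\{E_1,E_2\}$, and using $\trace(\nabla_YA_H)=Y(2|H|^2)=0$, $\trace A_{\nabla^\perp_YH}=\langle 2H,\nabla^\perp_YH\rangle=Y|H|^2=0$, and the pair symmetry $\langle\bar R(E_i,Y)E_i,H\rangle=\langle\bar R(E_i,H)E_i,Y\rangle$, I would get
$$
(\Div A_H)(Y)=\langle\trace A_{\nabla^\perp_\cdot H}(\cdot)+\trace(\bar R(\cdot,H)\cdot)^{\top},Y\rangle,
$$
and the right-hand side vanishes precisely by Corollary \ref{bicons} applied with $m=2$ and $\grad|H|^2=0$. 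Thus $\phi_H$ is a divergence-free trace-free symmetric tensor on $\Sigma^2$, and on a surface this forces $\phi_H$ to be a Codazzi tensor, $(\nabla_X\phi_H)Y=(\nabla_Y\phi_H)X$: the Codazzi defect, being antisymmetric in its first two arguments, has exactly two independent components on a $2$-manifold, and these coincide (up to sign) with the two components of $\Div\phi_H$.

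Finally I would run the Bochner argument in components. Writing $C_{cab}=\nabla_c(\phi_H)_{ab}-\nabla_a(\phi_H)_{cb}$, the Codazzi property gives $C=0$, so $\nabla^c\nabla_c(\phi_H)_{ab}=\nabla^c\nabla_a(\phi_H)_{cb}=\nabla_a(\Div\phi_H)_b+[\nabla^c,\nabla_a](\phi_H)_{cb}$. The first term vanishes by $\Div\phi_H=0$, and the curvature commutator, after inserting $R(X,Y)Z=K(\langle Y,Z\rangle X-\langle X,Z\rangle Y)$ and using $\trace\phi_H=0$, contributes $2K(\phi_H)_{ab}$; hence $\Delta\phi_H=2K\phi_H$ and contracting with $\phi_H$ yields the claim. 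The main obstacle is the middle step: deriving the Codazzi equation for $A_H$ with all its $\nabla^\perp H$ and ambient-curvature correction terms and then tracing it so that the resulting divergence matches, term by term and with the correct signs, the expression in Corollary \ref{bicons}. This is where both the CMC and the biconservative hypotheses are used, and it is the only place the ambient curvature $\bar R$ enters; the fact that it cancels there is exactly what makes the final identity purely intrinsic, with only the Gaussian curvature $K$ surviving.
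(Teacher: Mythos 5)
Your proposal is correct, and its backbone is the same as the paper's: everything reduces to showing that CMC plus biconservativity makes $\phi_H$ a Codazzi tensor on $\Sigma^2$, after which a Simons-type identity yields the formula. The executions differ in both halves, though. For the Codazzi property, the paper computes the Codazzi defect of $A_H$ directly in isothermal coordinates, obtaining for any biconservative surface the identity
$$
\Big(\nabla_{\frac{\partial}{\partial x}}A_H\Big)\frac{\partial}{\partial y}-\Big(\nabla_{\frac{\partial}{\partial y}}A_H\Big)\frac{\partial}{\partial x}=\frac{3}{2}\Big(-\frac{\partial}{\partial y}(|H|^2)\frac{\partial}{\partial x}+\frac{\partial}{\partial x}(|H|^2)\frac{\partial}{\partial y}\Big),
$$
whose right-hand side dies under CMC; you instead trace the ambient Codazzi equation and match $\Div A_H$ term by term with Corollary \ref{bicons} (your trace computation, the pair symmetry of $\bar R$, and the vanishing of $\trace(\nabla_YA_H)$ and $\trace A_{\nabla^{\perp}_YH}$ under CMC are all correct), then convert $\Div\phi_H=0$ into the Codazzi property via the two-dimensional equivalence for trace-free symmetric tensors, which is a correct and standard fact. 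The two routes carry the same content: tracing your displayed identity without assuming CMC gives $\Div A_H=\frac{1}{2}\grad|H|^2$ for any biconservative surface, which is exactly the invariant form of the paper's coordinate formula. For the analytic half, the paper invokes the Cheng--Yau formula for symmetric Codazzi operators \cite{CY} as a black box, while you rederive the needed Weitzenb\"ock identity $\Delta\phi_H=2K\phi_H$ by commuting covariant derivatives and then contract against $\phi_H$; your argument is self-contained (and makes visible that the ambient curvature enters only through the biconservative equation, where it cancels), at the cost of some index bookkeeping that the citation avoids.
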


\begin{proof} We first recall the following Simons type formula (equation $2.8$ in \cite{CY}). Let $\Sigma^m$ be an $m$-dimensional Riemannian manifold and consider a symmetric operator $S$ on $\Sigma^m$ that satisfies the Codazzi equation $(\nabla_XS)Y=(\nabla_YS)X$. Then, we have
\begin{equation}\label{delta}
\frac{1}{2}\Delta|S|^2=|\nabla S|^2+\sum_{i=1}^{m}\lambda_i(\trace S)_{ii}+\frac{1}{2}\sum_{i,j=1}^{m}R_{ijij}(\lambda_i-\lambda_j)^2,
\end{equation}
where $\lambda_i$, $1\leq i\leq m$, are the eigenvalues of $S$, and $R_{ijkl}$ are the components of the Riemannian curvature of $\Sigma^m$.

In our case, where $\Sigma^2$ is a biconservative surface, using isothermal coordinates $(x,y)$ on the surface, we get, by a straightforward computation,
$$
\Big(\nabla_{\frac{\partial}{\partial x}}A_H\Big)\frac{\partial}{\partial y}-\Big(\nabla_{\frac{\partial}{\partial y}}A_H\Big)\frac{\partial}{\partial x}=\frac{3}{2}\Big(-\frac{\partial}{\partial y}(|H|^2)\frac{\partial}{\partial x}+\frac{\partial}{\partial x}(|H|^2)\frac{\partial}{\partial y}\Big),
$$
which, since $\Sigma^2$ has constant mean curvature, shows that $A_H$, and then $\phi_H$, satisfies the Codazzi equation. Since $\phi_H$ is symmetric and traceless, we conclude using equation \eqref{delta} with $S=\phi_H$.
\end{proof}

\begin{corollary}\label{cor:simons} Let $\Sigma^2$ be a CMC biconservative surface in a Riemannian manifold $\bar M$ and assume that $\Sigma^2$ is compact and $K\geq 0$. Then $\nabla A_H=0$ and the surface is pseudo-umbilical or flat.
\end{corollary}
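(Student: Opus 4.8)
The statement to prove is Corollary \ref{cor:simons}: if $\Sigma^2$ is a compact CMC biconservative surface with $K\geq 0$, then $\nabla A_H=0$ and the surface is pseudo-umbilical or flat. The plan is to feed the Simons type equation from Theorem \ref{thm:delta} into an integration-over-a-closed-manifold argument. Concretely, since $\Sigma^2$ is compact and $|\phi_H|^2$ is a smooth function on it, the divergence theorem gives $\int_{\Sigma^2}\Delta|\phi_H|^2\,dv=0$. Applying this to the identity
$$
\frac{1}{2}\Delta|\phi_H|^2=2K|\phi_H|^2+|\nabla\phi_H|^2
$$
and integrating, I would obtain
$$
0=\int_{\Sigma^2}\bigl(2K|\phi_H|^2+|\nabla\phi_H|^2\bigr)\,dv.
$$

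Now I would invoke the hypothesis $K\geq 0$ to observe that both integrands are pointwise nonnegative: $K|\phi_H|^2\geq 0$ since $K\geq 0$, and $|\nabla\phi_H|^2\geq 0$ trivially. An integral of a nonnegative continuous function that vanishes forces the integrand to vanish identically, so both terms are zero everywhere on $\Sigma^2$. From $|\nabla\phi_H|^2\equiv 0$ I immediately get $\nabla\phi_H=0$, and since $\phi_H=A_H-|H|^2\id$ with $|H|$ constant (the CMC hypothesis), this is equivalent to $\nabla A_H=0$, giving the first conclusion.

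For the dichotomy, I would use the other vanishing condition $K|\phi_H|^2\equiv 0$. At any point where $K>0$, this forces $|\phi_H|^2=0$, i.e.\ $\phi_H=0$, which says $A_H=|H|^2\id$ so that $\Sigma^2$ is pseudo-umbilical at that point. Conversely, if the surface fails to be pseudo-umbilical somewhere, then $|\phi_H|^2>0$ on some open set, which forces $K\equiv 0$ there; combined with $\nabla A_H=0$ (so the eigenvalue structure of $\phi_H$ is parallel) one concludes $K\equiv 0$ on the whole connected surface, i.e.\ $\Sigma^2$ is flat. I would phrase this as: either $\phi_H$ vanishes identically (pseudo-umbilical case), or it is nonzero somewhere, in which case $K$ vanishes on a dense set and hence everywhere by continuity (flat case).

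I do not anticipate a serious obstacle here, as the argument is a standard integrated-Bochner/Simons vanishing. The only point requiring mild care is the logical structure of the final alternative: the two conditions $\nabla A_H=0$ and $K|\phi_H|^2\equiv 0$ do not by themselves separate cleanly into ``pseudo-umbilical OR flat'' pointwise, so I would argue globally using connectedness of $\Sigma^2$—if $\phi_H$ is not identically zero, its zero set is closed with empty interior (or one passes to the open set where it is nonzero), forcing $K=0$ on an open dense set and then on all of $\Sigma^2$ by continuity of $K$. This is the only step that is more than a one-line computation.
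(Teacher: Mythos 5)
Your proof is correct, and its first half — integrating the Simons identity of Theorem \ref{thm:delta} over the compact surface and using $K\geq 0$ to force $\nabla\phi_H=0$ and $K|\phi_H|^2\equiv 0$ pointwise — is exactly the paper's argument. Where you genuinely diverge is in the dichotomy step. The paper gets ``pseudo-umbilical or flat'' by invoking Theorem \ref{thm:Q}: since $|H|$ is constant, the $(2,0)$-part of the Hopf form is holomorphic, so $\phi_H$ either vanishes identically or vanishes only on a closed set without interior points; in the latter case $K=0$ on an open dense set and hence everywhere by continuity. You instead exploit the already-established parallelism $\nabla\phi_H=0$: since the connection is metric, $|\phi_H|^2$ is constant on the (connected) surface, so $\phi_H$ is either identically zero or nowhere zero, and in the second case $K\equiv 0$ follows immediately from $K|\phi_H|^2\equiv 0$. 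This is a legitimate and arguably cleaner route: it is self-contained (no appeal to holomorphicity of the Hopf form) and yields a stronger conclusion (the nonvanishing set is all of $\Sigma^2$, not merely dense), so your closing density-plus-continuity maneuver is not even needed. One caution: in your last paragraph you assert, as a fallback, that ``if $\phi_H$ is not identically zero, its zero set is closed with empty interior''; that statement is not free — it is precisely what the paper extracts from Theorem \ref{thm:Q} — so either cite that theorem to support it or, better, drop it and rely solely on the parallelism argument, which makes the assertion superfluous.
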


\begin{proof} From Theorem \ref{thm:delta} we get that $\int_{\Sigma^2}(2K|\phi_H|^2+|\nabla\phi_H|^2)\ dv=0$
and, since $2K|\phi_H|^2+|\nabla\phi_H|^2\geq 0$, one obtains that $\nabla A_H=\nabla\phi_H=0$ and, at any point on the surface, $K=0$ or $\phi_H=0$. We conclude using Theorem \ref{thm:Q}, that shows that $\phi_H$ either vanishes at any point of $\Sigma^2$ or only on a closed set without interior points. Hence, if the surface is not pseudo-umbilical, it follows that $K=0$ on an open dense set in $\Sigma^2$, and then the Gaussian curvature vanishes everywhere.
\end{proof}

\begin{corollary}\label{simons2} 
Let $\Sigma^2$ be a non-minimal CMC biconservative surface in a Riemannian manifold $\bar M$, with sectional curvature bounded from below by a constant $K_0$, such that 
$\mu=\sup_{\Sigma^2}(|\sigma|^2-(1/|H|)^2|A_{H}|^2)<+\infty$. Then
$$
-\Delta|\phi_H|\leq a|\phi_H|^3+b|\phi_H|,
$$
where $a$ and $b$ are constants depending on $K_0$, $|H|$, and $\mu$.
\end{corollary}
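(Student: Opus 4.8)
The plan is to turn the Simons type \emph{identity} of Theorem \ref{thm:delta} into an \emph{inequality} for the scalar function $|\phi_H|$ and then estimate its right-hand side using the Gauss equation and the hypotheses on $K_0$, $|H|$ and $\mu$. First I would work on the open set $\Omega=\{p\in\Sigma^2:|\phi_H|(p)>0\}$, where $|\phi_H|$ is smooth, and record the elementary identity
$$
\tfrac{1}{2}\Delta|\phi_H|^2=|\phi_H|\,\Delta|\phi_H|+|\nabla|\phi_H||^2 .
$$
Comparing this with Theorem \ref{thm:delta} and invoking the Kato inequality $|\nabla|\phi_H||\le|\nabla\phi_H|$, the nonnegative term $|\nabla\phi_H|^2-|\nabla|\phi_H||^2$ drops out, and dividing by $|\phi_H|>0$ yields
$$
-\Delta|\phi_H|\le -2K|\phi_H|\qquad\text{on }\Omega .
$$
It then remains to bound $-2K$ from above by an expression of the form $a|\phi_H|^2+b$.

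Next I would rewrite the Gaussian curvature extrinsically. Fixing an orthonormal tangent frame $\{E_1,E_2\}$ and using the Gauss equation \eqref{eq:Gauss} together with $4|H|^2=|\sigma(E_1,E_1)+\sigma(E_2,E_2)|^2$ and $|\sigma|^2=|\sigma(E_1,E_1)|^2+2|\sigma(E_1,E_2)|^2+|\sigma(E_2,E_2)|^2$, a short computation gives
$$
K=\bar K+2|H|^2-\tfrac{1}{2}|\sigma|^2 ,
$$
where $\bar K=\langle\bar R(E_1,E_2)E_2,E_1\rangle$ is the ambient sectional curvature of the tangent plane. Hence $-2K=-2\bar K+|\sigma|^2-4|H|^2$, and since the sectional curvature of $\bar M$ is bounded below by $K_0$ we may replace $-2\bar K$ by $-2K_0$.

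Finally I would feed in the remaining hypotheses. Writing $\phi_H=A_H-|H|^2\id$ and taking traces gives $|A_H|^2=|\phi_H|^2+2|H|^4$, so that $\tfrac{1}{|H|^2}|A_H|^2=\tfrac{1}{|H|^2}|\phi_H|^2+2|H|^2$. The quantity $|\sigma|^2-\tfrac{1}{|H|^2}|A_H|^2$ is nonnegative (it equals the sum of $|A_V|^2$ over the normal directions $V$ orthogonal to $H$) and, by hypothesis, is $\le\mu$, whence
$$
|\sigma|^2\le\tfrac{1}{|H|^2}|\phi_H|^2+2|H|^2+\mu .
$$
Substituting into the expression for $-2K$ gives $-2K\le\tfrac{1}{|H|^2}|\phi_H|^2+(\mu-2K_0-2|H|^2)$, a pointwise scalar inequality valid on all of $\Sigma^2$. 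Multiplying by $|\phi_H|\ge0$ and chaining with the inequality of the first step yields the assertion, with $a=1/|H|^2$ and $b=\mu-2K_0-2|H|^2$; both are constants because $|H|$ is constant on a CMC surface.

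The only genuine subtlety I anticipate is the behaviour of $|\phi_H|$ at its zeros, since the differential inequality of the first step was derived pointwise only on $\Omega$. I would handle this exactly as in such Simons type arguments: either interpret $-\Delta|\phi_H|\le a|\phi_H|^3+b|\phi_H|$ in the distributional (weak) sense across $\Sigma^2\setminus\Omega$ via the Kato inequality, or observe that by Theorem \ref{thm:Q} the set $\{|\phi_H|=0\}$ is either all of $\Sigma^2$ (the excluded minimal/pseudo-umbilical situations) or closed without interior, so that the estimate, tested against nonnegative functions in the intended applications, extends across this negligible set. This is the main technical point; the rest is a routine computation.
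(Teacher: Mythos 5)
Your proposal is correct and follows essentially the same route as the paper: the Simons identity of Theorem \ref{thm:delta} combined with the Kato inequality $|\nabla|\phi_H||\le|\nabla\phi_H|$, and a lower bound for $K$ obtained from the Gauss equation together with the hypotheses on $K_0$ and $\mu$, yielding the identical intermediate estimate $K\ge K_0+|H|^2-\frac{1}{2|H|^2}|\phi_H|^2-\frac{\mu}{2}$ and the same constants $a=1/|H|^2$, $b=\mu-2K_0-2|H|^2$ (the paper organizes the curvature computation through $\sum_\alpha\det A_\alpha$ rather than your $K=\bar K+2|H|^2-\frac12|\sigma|^2$, but the algebra is the same). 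Your closing remark on the zeros of $|\phi_H|$ is a legitimate point of care that the paper's proof passes over in silence.
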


\begin{proof} 
Let $\{E_3=H/H,E_4,\ldots,E_{n}\}$ be a local orthonormal frame field in 
the normal bundle, where $n$ is the dimension of the ambient space $\bar M$, and denote $A_{\alpha}=A_{E_{\alpha}}$. Then, from the Gauss equation  \eqref{eq:Gauss} of $\Sigma^2$ in $\bar M$, we obtain the following expression of the Gaussian curvature of $\Sigma^2$
\begin{align*}
K&=\langle\bar R(E_1,E_2)E_2,E_1\rangle+\sum_{\alpha=3}^n\det A_{\alpha}\\&=\langle\bar R(E_1,E_2)E_2,E_1\rangle+|H|^2-\frac{1}{2|H|^2}|\phi_H|^2-\frac{1}{2}(|\sigma|^2-|A_3|^2),
\end{align*}
where $\{E_1,E_2\}$ is a local orthonormal frame field on the surface. Since by hypothesis we have $\langle\bar R(E_1,E_2)E_2,E_1\rangle\geq K_0$, we get that
\begin{equation}\label{eq:K0}
K\geq K_0+|H|^2-\frac{1}{2|H|^2}|\phi_H|^2-\frac{\mu}{2},
\end{equation}
and then, from Theorem \ref{thm:delta}, one obtains
\begin{align*}
\frac{1}{2}\Delta|\phi_H|^2\geq 2\left(K_0+|H|^2-\frac{1}{2|H|^2}|\phi_H|^2-\frac{\mu}{2}\right)|\phi_H|^2+|\nabla\phi_H|^2.
\end{align*}

Since $|\nabla|\phi_H||\leq|\nabla \phi_H|$, we easily get that
\begin{align*}
-\Delta|\phi_H|&\leq \frac{1}{|H|^2}|\phi_H|^3-\left(2K_0+2|H|^2-\mu\right)|\phi_H|,
\end{align*}
which completes the proof.
\end{proof}

Now, let us consider a CMC surface $\Sigma^2$ in a Hadamard manifold. We recall that such a surface satisfies a Sobolev inequality of the form
\begin{equation}\label{sobolev}
\forall f\in C_0^{\infty}(\Sigma),\quad ||f||_2\leq A||\nabla f||_1+B||f||_1,
\end{equation}
where $||f||_p=(\int_{\Sigma}|f|^p \ dv)^{1/p}$ is the $L^p$-norm of the function $f$ and $A$ and $B$ are constants that depends only on the mean curvature $|H|$ of the surface (see \cite{hs}).

Next, let us fix a point $x_0\in\Sigma^2$ on the surface and consider the Riemannian distance function $d(x_0,x)$ to $x_0$ and the following open domains  
$$
B(R)=\{x\in\Sigma | d(x_0,x)<R\}\quad\textnormal{and}\quad E(R)=\{x\in\Sigma | d(x_0,x)>R\}.
$$

Now we can state the following theorem.

\begin{theorem}\label{zero:PMC}
Let $\Sigma^2$ be a complete non-minimal CMC biconservative surface in a Hadamard manifold $\bar M$, with sectional curvature bounded from below by a constant $K_0<0$, such that the norm of its second fundamental form $\sigma$ is bounded and 
\begin{equation}\label{eq:finite-curvature}
\int_{\Sigma^2}|\phi_H|^2 \ dv < +\infty.
\end{equation}
Then the function $u=|\phi_H|$ goes to zero uniformly at infinity. More exactly, there exist positive constants $C_0$ and $C_1$, depending on $K_0$, $|H|$, and $\mu=\sup_{\Sigma^2}(|\sigma|^2-(1/|H|)^2|A_{H}|^2)$, and a positive radius $R_{\Sigma^2}$, determined by $C_1\displaystyle\int_{E(R_{\Sigma^2})}u^2 \ dv\leq 1$, such that
$$
||u||_{\infty, E(2R)}\leq C_0\int_{\Sigma^2}u^2 \ dv,
$$
for all $R\geq  R_{\Sigma^2}$. Moreover, there exist some positive constants $D_0$ and $E_0$, depending on $K_0$, $|H|$, and $\mu$, such that the inequality
$\displaystyle\int_{\Sigma^2}u^2 \ dv \leq D_0$ implies
$$
||u||_{\infty} \leq E_0 \int_{\Sigma^2}u^2 \ dv.
$$
\end{theorem}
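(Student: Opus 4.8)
The plan is to treat $u=|\phi_H|\ge 0$ as a nonnegative weak subsolution of the differential inequality supplied by Corollary~\ref{simons2}, namely $-\Delta u\le a u^{3}+b u$ with $a=1/|H|^{2}>0$ and $b$ a constant depending only on $K_0$, $|H|$, and $\mu$, and then to run a Moser iteration based on the Sobolev inequality \eqref{sobolev}. The two remaining hypotheses feed the scheme in complementary ways: the bound on $|\sigma|$ forces $u\in L^{\infty}(\Sigma^2)$ (since $|\phi_H|\le|A_H|$ is controlled by $|\sigma|$), while finite total curvature \eqref{eq:finite-curvature} gives $\int_{E(R)}u^{2}\,dv\to 0$ as $R\to\infty$; together they also place $u$ in every $L^{p}$, $p\ge 2$. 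Throughout, $u=|\phi_H|$ is Lipschitz and, by the Kato inequality $|\nabla|\phi_H||\le|\nabla\phi_H|$ already used to derive Corollary~\ref{simons2}, it may legitimately be tested against the inequality; completeness of $\Sigma^2$ guarantees that the cutoff integrations by parts below produce no boundary terms.

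\emph{Caccioppoli step.} For an exponent $p\ge 2$ and a compactly supported cutoff $\eta$, I would multiply $\Delta u\ge -(a u^{2}+b)u$ by $u^{\,p-1}\eta^{2}\ge 0$ and integrate by parts. After estimating the resulting cross term $\int u^{\,p-1}\eta\,\nabla u\cdot\nabla\eta$ by Young's inequality and absorbing it, one is left, writing $w=u^{\,p/2}$, with a reverse Poincar\'e inequality $\int\eta^{2}|\nabla w|^{2}\le c\big(a\int(w\eta)^{2}u^{2}+b_{+}\int w^{2}\eta^{2}+\int w^{2}|\nabla\eta|^{2}\big)$, where $c$ is a universal multiple of $p$. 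Adding $\int w^{2}|\nabla\eta|^{2}$ on both sides converts this into a bound on $\|\nabla(w\eta)\|_{L^2}^{2}$.

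\emph{Sobolev and absorption (the crux).} I would then apply \eqref{sobolev} to $f=(w\eta)^{2}$, which yields $\|w\eta\|_{L^4}^{2}\le 2A\,\|w\eta\|_{L^2}\,\|\nabla(w\eta)\|_{L^2}+B\,\|w\eta\|_{L^2}^{2}$, and substitute the Caccioppoli bound for $\|\nabla(w\eta)\|_{L^2}$. The only dangerous term is the cubic contribution $a\int(w\eta)^{2}u^{2}$: by H\"older it is at most $a\,\|w\eta\|_{L^4}^{2}\,\big(\int_{\operatorname{supp}\eta}u^{4}\big)^{1/2}$, and since $u\le\sup_{\Sigma^2}|\phi_H|<\infty$ this factor is controlled by the tail energy $\int_{\operatorname{supp}\eta}u^{2}$. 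Choosing $R$ at least as large as the radius $R_{\Sigma^2}$ fixed by $C_{1}\int_{E(R_{\Sigma^2})}u^{2}\,dv\le 1$ makes this factor small enough that the $\|w\eta\|_{L^4}^{2}$ it produces can be absorbed into the left-hand side. What survives is a reverse H\"older inequality which, with $\eta$ a standard cutoff between $E(2R)$ and $E(R)$ satisfying $|\nabla\eta|\le C/R$, reads $\|u\|_{L^{2p}(E(2R))}\le\big[C(1+R^{-2})\big]^{1/p}\,\|u\|_{L^{p}(E(R))}$.

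\emph{Iteration and conclusion.} Iterating over the exponents $p_{k}=2^{k+1}$ and a nested sequence of ends, the product of the constants $\prod_{k}\big[C(1+R^{-2})\big]^{1/p_{k}}$ converges because $\sum_{k}1/p_{k}<\infty$, so letting $k\to\infty$ gives $\|u\|_{\infty,E(2R)}\le C_{0}\int_{E(R)}u^{2}\,dv\le C_{0}\int_{\Sigma^2}u^{2}\,dv$ for every $R\ge R_{\Sigma^2}$, with $C_{0}$ depending only on $K_0$, $|H|$, and $\mu$ through the constants of Corollary~\ref{simons2} and \eqref{sobolev}; since the tail energy $\int_{E(R)}u^{2}\,dv\to 0$, this is exactly the uniform decay of $u=|\phi_H|$ at infinity. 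The global statement is the same argument run with cutoffs that exhaust $\Sigma^2$ rather than its ends: once $\int_{\Sigma^2}u^{2}\,dv\le D_{0}$ the threshold needed for the absorption is met on all of $\Sigma^2$ at once, giving $\|u\|_{\infty}\le E_{0}\int_{\Sigma^2}u^{2}\,dv$. I expect the main obstacle to be precisely this absorption: guaranteeing that the cubic term is dominated \emph{uniformly in} $p$ so that the infinite product of iteration constants stays finite, which is where all three hypotheses — finite total curvature to make the tail small, the bound on $|\sigma|$ to turn an $L^{4}$ factor into $L^{2}$ energy, and the curvature bound $K_0$ entering the coefficients $a,b$ of Corollary~\ref{simons2} — must be used together, further complicated by the non scale-invariant lower-order term $B\|f\|_{L^1}$ in \eqref{sobolev}.
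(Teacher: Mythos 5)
Your proposal is correct and is essentially the paper's own proof: the authors simply combine the Sobolev inequality \eqref{sobolev} with the Simons-type inequality of Corollary \ref{simons2} and then ``work as in the proof of \cite[Theorem 4.1]{bds}'', which is exactly the Caccioppoli/Moser iteration you spell out, including using the bound on $|\sigma|$ (hence on $u=|\phi_H|$) to convert the dangerous $L^4$ factor in the cubic term into the small $L^2$ tail so that absorption works uniformly in $p$. The only quibble is that your absorption makes $C_1$ (and hence $R_{\Sigma^2}$) depend also on $\sup_{\Sigma^2}|\phi_H|$ rather than only on $(K_0,|H|,\mu)$ as stated, but that dependence is implicitly carried by the bounded-$|\sigma|$ hypothesis, which is needed precisely because in dimension two the natural BDS H\"older splitting $\int(\eta u^{p/2})^2u^2\le\|\eta u^{p/2}\|_{L^4}^2\big(\int_{\operatorname{supp}\eta}u^4\big)^{1/2}$ cannot be fed by $\int u^2<+\infty$ alone.
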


\begin{proof} Since the function $u=|\phi_H|$ satisfies the Sobolev inequality \eqref{sobolev} and the Simons type inequality in Corollary \ref{simons2}, we work as in the proof of \cite[Theorem 4.1]{bds} and come to the conclusion.
\end{proof}

We note that, when $n=2$, we have $\mu=0$ and then it is easy to see that \eqref{eq:finite-curvature} implies that $|\sigma|$ is bounded. Therefore, we have the following corollary.

\begin{corollary}\label{zero}
Let $\Sigma^2$ be a complete non-minimal CMC biconservative surface in a $3$-dimensional Hadamard manifold $\bar M$, with sectional curvature bounded from below by a constant $K_0<0$, such that
$$
\int_{\Sigma^2}|\phi_H|^2 \ dv < +\infty.
$$
Then the function $u=|\phi_H|$ goes to zero uniformly at infinity. More exactly, there exist positive constants $C_0$ and $C_1$, depending on $K_0$ and $|H|$, and a positive radius $R_{\Sigma^2}$, determined by $C_1\displaystyle\int_{E(R_{\Sigma^2})}u^2 \ dv\leq 1$, such that
$$
||u||_{\infty, E(2R)}\leq C_0\int_{\Sigma^2}u^2 \ dv,
$$
for all $R\geq  R_{\Sigma^2}$. Moreover, there exist some positive constants $D_0$ and $E_0$, depending on $K_0$ and $|H|$, such that the inequality
$\displaystyle\int_{\Sigma^2}u^2 \ dv \leq D_0$ implies
$$
||u||_{\infty} \leq E_0 \int_{\Sigma^2}u^2 \ dv.
$$
\end{corollary}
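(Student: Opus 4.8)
The plan is to deduce Corollary \ref{zero} directly from Theorem \ref{zero:PMC} by verifying that, when $\Sigma^2$ has codimension one (that is, when the ambient manifold is $3$-dimensional), all the hypotheses of that theorem are automatically fulfilled. Two points require attention: that the quantity $\mu=\sup_{\Sigma^2}(|\sigma|^2-(1/|H|)^2|A_H|^2)$ vanishes, so that the constants in the conclusion depend only on $K_0$ and $|H|$, and that the norm $|\sigma|$ of the second fundamental form is bounded, a hypothesis that in the general theorem must be imposed separately.

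First I would compute $\mu$. Since $H\neq 0$ and the normal bundle is one-dimensional, it is spanned by $E_3=H/|H|$, so that $A_H=|H|A_3$ and $|\sigma|^2=|A_3|^2=(1/|H|^2)|A_H|^2$ at every point; hence $\mu=0$. Writing $A_H=\phi_H+|H|^2\id$ and using $\trace\phi_H=0$ gives $|A_H|^2=|\phi_H|^2+2|H|^4$, and therefore
$$
|\sigma|^2=\frac{1}{|H|^2}|\phi_H|^2+2|H|^2.
$$
In particular, since $|H|$ is a positive constant, $|\sigma|$ is bounded if and only if $u=|\phi_H|$ is bounded, and the constants $C_0,C_1,D_0,E_0$ produced by Theorem \ref{zero:PMC} now depend only on $K_0$ and $|H|$.

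It remains to prove that $u=|\phi_H|$ is bounded on $\Sigma^2$, which is the main technical point. Because $\mu=0<+\infty$, Corollary \ref{simons2} applies and yields the Simons type subsolution inequality $-\Delta u\leq a\,u^3+b\,u$ with $a,b$ depending only on $K_0$ and $|H|$; moreover $u\in L^2(\Sigma^2)$ by \eqref{eq:finite-curvature}, and $u$ obeys the Sobolev inequality \eqref{sobolev} because $\Sigma^2$ is a CMC surface in a Hadamard manifold. From these three ingredients the boundedness of $u$ follows by the local $L^\infty$-estimate underlying the B\'erard--do Carmo--Santos technique of \cite{bds}: on a compact core $\overline{B(R)}$ the continuous function $u$ is automatically bounded (here completeness is used, via Hopf--Rinow), while on an end $E(R)$ the finiteness of $\int_{\Sigma^2}u^2$ forces the tail $\int_{E(R)}u^2$ to be as small as desired, so that a Moser iteration based on \eqref{sobolev} absorbs the cubic term and bounds $\|u\|_{\infty,E(R)}$ in terms of that tail. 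This is exactly where the difficulty lies: one must first fix $R$ large enough that the smallness of $\int_{E(R)}u^2$ renders the nonlinear $u^3$ contribution subordinate, and only then iterate, precisely as in \cite{bds}.

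With $\mu=0$ and $|\sigma|$ bounded established, every hypothesis of Theorem \ref{zero:PMC} holds, and that theorem delivers the uniform decay of $u=|\phi_H|$ at infinity together with the quantitative estimates for $\|u\|_{\infty,E(2R)}$ and $\|u\|_{\infty}$, now with constants depending only on $K_0$ and $|H|$. This completes the proof plan.
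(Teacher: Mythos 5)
Your proposal is correct and takes essentially the same route as the paper: the paper's entire proof is the remark preceding the corollary, which observes that in codimension one $\mu=0$ and that \eqref{eq:finite-curvature} then forces $|\sigma|$ to be bounded, so that Theorem \ref{zero:PMC} applies with constants depending only on $K_0$ and $|H|$. Your write-up moreover fills in the step the paper dismisses as ``easy to see'' --- namely that boundedness of $u=|\phi_H|$ (equivalently of $|\sigma|$, via $|\sigma|^2=(1/|H|^2)|\phi_H|^2+2|H|^2$) follows from the Simons and Sobolev inequalities by the B\'erard--do Carmo--Santos iteration, which needs only $\mu<+\infty$ and not the a priori boundedness of $|\sigma|$, so no circularity arises.
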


In the following we will use Theorem \ref{zero:PMC} to prove a compactness result for CMC biconservative surfaces in Hadamard manifolds.

\begin{theorem}\label{theo-compact0}
Let $\Sigma^2$ be a complete non-minimal CMC biconservative surface in a Hadamard manifold $\bar M$, with sectional curvature bounded from below by a constant $K_0<0$, such that the norm of its second fundamental form $\sigma$ is bounded,
$$
\int_{\Sigma^2}|\phi_H|^2\ dv < +\infty,
$$
and $|H|^2>(\mu-2K_0)/2$, where $\mu=\sup_{\Sigma^2}(|\sigma|^2-(1/|H|^2)|A_{H}|^2)$. Then $\Sigma^2$ is compact.
\end{theorem}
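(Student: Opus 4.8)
The plan is to derive a pointwise lower bound on the Gaussian curvature $K$ from the hypothesis $|H|^2>(\mu-2K_0)/2$, combine it with the decay of $|\phi_H|$ furnished by Theorem \ref{zero:PMC}, and conclude that $\Sigma^2$ has a strictly positive curvature lower bound outside a compact set. This would force finite diameter via a Bonnet--Myers type argument, and completeness then yields compactness.

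First I would recall inequality \eqref{eq:K0} from the proof of Corollary \ref{cor:simons}, namely
\begin{equation*}
K\geq K_0+|H|^2-\frac{1}{2|H|^2}|\phi_H|^2-\frac{\mu}{2}.
\end{equation*}
The hypothesis $|H|^2>(\mu-2K_0)/2$ is exactly the statement that the constant part $K_0+|H|^2-\mu/2$ is strictly positive; call it $2\delta>0$. Thus $K\geq 2\delta-(1/2|H|^2)|\phi_H|^2$. The role of Theorem \ref{zero:PMC} is now to control the only negative term: since $\Sigma^2$ is complete, non-minimal, CMC biconservative with $|\sigma|$ bounded and finite total curvature, that theorem guarantees $u=|\phi_H|\to 0$ uniformly at infinity. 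Hence there is a radius $R_0$ so that $(1/2|H|^2)|\phi_H|^2<\delta$ on $E(R_0)$, giving $K\geq\delta>0$ outside the compact ball $\overline{B(R_0)}$.

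Next I would upgrade this to a uniform positive lower bound on all of $\Sigma^2$. The closed ball $\overline{B(R_0)}$ is compact (geodesic balls in a complete manifold are relatively compact), so $K$ attains a finite minimum there; combined with $K\geq\delta$ on $E(R_0)$, we obtain a global bound $K\geq\kappa_0$ for some constant $\kappa_0$, and more importantly $K\geq\delta>0$ everywhere outside a compact set. To conclude compactness I would argue that $\Sigma^2$ has finite diameter: any geodesic ray starting at $x_0$ must eventually enter the region $E(R_0)$ where $K\geq\delta$, and once inside a region of curvature bounded below by a positive constant, a conjugate point develops within controlled length, contradicting the minimizing property of a ray beyond that length. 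Since a complete manifold of finite diameter is bounded, and complete plus bounded gives compact (Hopf--Rinow), $\Sigma^2$ is compact.

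The main obstacle I anticipate is the transition from \emph{``$K\geq\delta$ on the exterior region $E(R_0)$''} to an honest Bonnet--Myers conclusion: the positive curvature bound holds only outside a compact set, not globally, so one cannot apply the classical theorem verbatim. I would handle this by invoking a Myers-type comparison along geodesic rays, using that any unbounded complete surface admits a ray on which, past a fixed length, the curvature stays $\geq\delta$; the existence of a conjugate point within length $\pi/\sqrt{\delta}$ of the moment the ray enters $E(R_0)$ then contradicts global minimality of the ray. An alternative, cleaner route would be to note that $K\geq\delta$ outside a compact set already implies, by the theory of manifolds with finitely many ends and asymptotically positive curvature, that $\Sigma^2$ has at most finitely many ends each of finite volume, and then use the finite-total-curvature hypothesis to rule out noncompactness outright. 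Either way, the delicate point is purely the large-scale geometry; the curvature estimate itself is immediate from \eqref{eq:K0} and Theorem \ref{zero:PMC}.
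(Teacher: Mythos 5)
Your proposal is correct, and its first half coincides with the paper's own proof: both use inequality \eqref{eq:K0} together with the uniform decay of $|\phi_H|$ from Theorem \ref{zero:PMC} to get $K\geq\delta>0$ outside a compact set, where $2\delta=K_0+|H|^2-\mu/2$ is positive precisely by the hypothesis $|H|^2>(\mu-2K_0)/2$. The conclusion, however, is reached by a genuinely different route. The paper argues integrally: since $K^-$ has compact support, $\int_{\Sigma^2}|K^-|\,dv<+\infty$, so by White's theorem \cite{w} also $\int_{\Sigma^2}K^+\,dv<+\infty$; combined with $K^+\geq\delta$ at infinity this gives finite volume outside a compact set, contradicting the infinite-volume property of complete non-compact surfaces invoked from \cite{f}. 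You instead run a Bonnet--Myers argument along a ray, and the point you flag as delicate is in fact solid, with no extra theory needed: if $\Sigma^2$ were non-compact, completeness yields a ray $\gamma$ issuing from $x_0$; since $d(x_0,\gamma(t))=t$, the ray enters $E(R_0)$ at time $R_0$ and never returns to $B(R_0)$, so $K\geq\delta$ holds along all of $\gamma|_{(R_0,\infty)}$. Every subsegment of a ray is minimizing, hence has positive semi-definite index form, while on a subsegment of length $L>\pi/\sqrt{\delta}$ inside $E(R_0)$ the test field $V(t)=\sin\bigl(\pi(t-a)/L\bigr)E(t)$, with $E$ parallel, unit and orthogonal to $\gamma'$, gives $I(V,V)\leq\frac{L}{2}\bigl(\pi^2/L^2-\delta\bigr)<0$, a contradiction; so there is no need to hedge, and your second, vaguer alternative via ends and finite volume (which is essentially a sketch of the paper's integral argument) can be discarded. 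What each approach buys: yours is more elementary and self-contained, using only the second variation formula and dispensing with the external results \cite{w} and \cite{f}; the paper's integral route yields the finiteness of the total positive curvature $\int_{\Sigma^2}K^+\,dv$ as a byproduct and stays within the circle of ideas (finite total curvature, volume growth) that organizes the rest of the section.
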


\begin{proof} Using inequality \eqref{eq:K0} and Theorem \ref{zero:PMC}, we have that the superior limit at infinity of the Gaussian curvature $K$ of $\Sigma^2$ is positive. It follows that the negative part $K^-$ of $K$ has compact support and, therefore, 
$$
\int_{\Sigma^2} |K^-|\ dv <+\infty,
$$
which implies, using \cite[Theorem 1]{w}, that also the positive part $K^+$ of $K$ satisfies 
$$
\int_{\Sigma^2}K^+\ dv<+\infty.
$$

Next, since outside a compact set $\Omega$ we have $K^+\geq k/2>0$, where
$$
k=K_0+|H|^2-\frac{\mu}{2},
$$
it follows that $\vol(\Sigma\backslash\Omega)<+\infty$. Since the volume of a complete non-compact surface is infinite (see \cite{f}), we conclude that $\Sigma^2$ is compact.
\end{proof}

When $n=2$, we use Theorem \ref{zero:PMC} and Corollary \ref{zero} to prove our next result.

\begin{corollary}\label{theo-compact1}
Let $\Sigma^2$ be a complete non-minimal CMC biconservative surface in a $3$-dimensional Hadamard manifold $\bar M$, with sectional curvature bounded from below by a constant $K_0<0$, such that 
$$
\int_{\Sigma^2}|\phi_H|^2\ dv < +\infty,
$$
and $|H|^2>-K_0$. Then $\Sigma^2$ is compact.
\end{corollary}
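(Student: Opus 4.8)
The plan is to recognize Corollary \ref{theo-compact1} as the three-dimensional specialization of Theorem \ref{theo-compact0} and to run the very same volume-comparison argument; the only genuinely new point is that the extra hypotheses of Theorem \ref{theo-compact0} (boundedness of $|\sigma|$, and the explicit value of $\mu$) become automatic or trivial when the normal bundle is one-dimensional. First I would observe that when $\bar M$ is $3$-dimensional the normal bundle of $\Sigma^2$ is the line bundle spanned by $E_3=H/|H|$, so $\sigma$ takes values only in the $E_3$-direction and $A_H=|H|A_3$. Consequently $|\sigma|^2=|A_3|^2=|A_H|^2/|H|^2$ holds pointwise, which gives $\mu=\sup_{\Sigma^2}\bigl(|\sigma|^2-(1/|H|^2)|A_H|^2\bigr)=0$. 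With $\mu=0$ the standing curvature hypothesis $|H|^2>(\mu-2K_0)/2$ of Theorem \ref{theo-compact0} collapses exactly to the hypothesis $|H|^2>-K_0$ assumed here.

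Next I would invoke Corollary \ref{zero} rather than Theorem \ref{zero:PMC}: in ambient dimension three the decay statement holds under the finite-total-curvature assumption $\int_{\Sigma^2}|\phi_H|^2\,dv<+\infty$ alone, without the separate requirement that $|\sigma|$ be bounded. Thus $u=|\phi_H|$ tends to $0$ uniformly at infinity. Feeding this into the Gauss-equation estimate \eqref{eq:K0}, which with $\mu=0$ reads
$$
K\geq K_0+|H|^2-\frac{1}{2|H|^2}|\phi_H|^2,
$$
I would conclude that the superior limit at infinity of $K$ is at least $k=K_0+|H|^2$, which is strictly positive by hypothesis. Hence $K$ is strictly positive outside some compact set $\Omega$, the negative part $K^-$ has compact support, and $\int_{\Sigma^2}|K^-|\,dv<+\infty$.

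From here the argument is identical to that of Theorem \ref{theo-compact0}. The finiteness of $\int_{\Sigma^2}|K^-|\,dv$ on a complete surface forces, by \cite[Theorem 1]{w}, the finiteness of $\int_{\Sigma^2}K^+\,dv$. Since $K^+\geq k/2>0$ on $\Sigma^2\setminus\Omega$, this bounds $\vol(\Sigma^2\setminus\Omega)\leq(2/k)\int_{\Sigma^2\setminus\Omega}K^+\,dv<+\infty$, so $\Sigma^2$ has finite total area. As a complete non-compact surface has infinite area by \cite{f}, we conclude that $\Sigma^2$ must be compact.

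The main obstacle here is not analytic but organizational: one must be certain that the pointwise identity $|\sigma|^2=|A_H|^2/|H|^2$ (equivalently, $\mu=0$) is precisely what renders the boundedness hypothesis on $|\sigma|$ in Theorem \ref{theo-compact0} superfluous, and that the uniform decay of $|\phi_H|$ is indeed available from Corollary \ref{zero} without that hypothesis. Once this is in hand, the corollary follows by specialization, with no further estimates required.
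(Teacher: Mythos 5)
Your proposal is correct and takes essentially the same route as the paper: the paper's own (one-line) proof likewise specializes Theorem \ref{theo-compact0} to the codimension-one setting, where $\mu=0$ holds automatically, the hypothesis $|H|^2>(\mu-2K_0)/2$ collapses to $|H|^2>-K_0$, and Corollary \ref{zero} provides the uniform decay of $|\phi_H|$ without any boundedness assumption on $|\sigma|$. The White/Frensel volume argument you then run is exactly the proof of Theorem \ref{theo-compact0}.
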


\end{document}